\newcommand{\hide}[1]{}
\DeclareMathAlphabet{\mathdutchcal}{U}{dutchcal}{m}{n}
\SetMathAlphabet{\mathdutchcal}{bold}{U}{dutchcal}{b}{n}
\DeclareMathAlphabet{\mathdutchbcal}{U}{dutchcal}{b}{n}
\def\textcolor#1{}
\newcommand{\R}{\mathbb{R}}
\newcommand{\dH}{d_{\mathcal H}}
\newcommand{\Ll}{\mathcal{I}}
\newcommand{\Rr}{\mathdutchcal{in}}
\newcommand{\Ch}{\mathdutchcal{Ch}}
\newcommand{\Hc}{\mathcal H}
\newcommand{\F}{\mathcal{F}}
\newcommand{\B}{{\bm B}}
\renewcommand{\tilde}{\widetilde}
\renewcommand{\rho}{\varrho}
\renewcommand{\phi}{\varphi}
\newcommand{\eps}{\varepsilon}
\renewcommand{\theta}{\vartheta}
\DeclareMathOperator{\inter}{int}
\newcommand{\sm}{\setminus}
\renewcommand{\ge}{\geqslant}
\renewcommand{\le}{\leqslant}
\theoremstyle{theorem}
\newtheorem{theorem}{Theorem}[section]
\newtheorem{lemma}[theorem]{Lemma}
\newtheorem{proposition}[theorem]{Proposition}
\newtheorem{corollary}[theorem]{Corollary}
\newtheorem{MainTheorem}{Theorem}
\newtheorem*{theoremRIP}{The Reverse Isoperimetric Inequality}
\newtheoremstyle{Intro}
{}
{}
{\itshape}
{}
{\scshape}
{.}
{.5em}
{}
\theoremstyle{Intro}
\theoremstyle{definition}
\theoremstyle{remark}
\newtheorem*{remark}{\textsc{Remark}}
\newcounter{reminder}
\newtheoremstyle{claim}
  {}
  {}
  {\itshape}
  {0pt}
  {\scshape}
  {.}
  { }
  {\thmname{#1}\thmnumber{ #2}\thmnote{ (#3)}}
\theoremstyle{claim}
\newtheorem{claim}[theorem]{Claim}
\numberwithin{equation}{section}
\title[Stability of planar reverse isoperimetric inequalities]{Stability of reverse isoperimetric inequalities in the plane: area, Cheeger, and inradius}
\author[Kostiantyn Drach]{Kostiantyn Drach}
\thanks{\noindent 
	{ \it Keywords: } $\lambda$-convexity; reverse isoperimetric inequality; reverse Cheeger inequality; inradius; area.}
\author[Kateryna Tatarko]{Kateryna Tatarko}
	\thanks{{\it \ 2020 Mathematics Subject Classification:} 52A30, 52A38, 53C40 (Primary); 52A27, 52A40, 52B60, 53C21 (Secondary)}
\thanks{\ The first author is partially supported by Departament de Recerca i Universitats de la Generalitat de Catalunya (2021 SGR 00697), Agencia Estatal de Investigaci\'on Grant PID2023-147252NB-I00, and Maria de Maeztu Excellence Grant CEX2020-001084-M. The second author is partially supported by NSERC Discovery Grant number 2022-02961. The authors are grateful to Julian Scheuer who asked the question about stability in the reverse isoperimetric inequality during the visit of the first author to the University of Frankfurt. 
We are grateful to Beatrice-Helen Vritsiou, Paul Milan Diaz, and Teresa Norris for identifying an issue in the proof in an earlier version of Section 4.2 and suggesting a correction. We also thank the anonymous referees for careful reading and helpful comments.}
\date{}
\address{Universitat de Barcelona, Gran Via de les Corts Catalanes, 585, 08007 Barcelona, Spain}
\address{Centre de Recerca Matem\`atica, Edifici C, Carrer de l'Albareda, 08193 Bellaterra, Barcelona, Spain}
\email{kostiantyn.drach@ub.edu}
\address{Department of Pure Mathematics, University of Waterloo, Waterloo, ON, N2L 3G1, Canada}
\email{ktatarko@uwaterloo.ca}
\begin{document}

\begin{abstract}
In this paper, we present stability results for various reverse isoperimetric problems in $\R^2$. Specifically, we prove the stability of the reverse isoperimetric inequality for \emph{$\lambda$-convex bodies} --- convex bodies 
with the property that each of their boundary points $p$ supports a ball of radius $1/\lambda$ so that the body lies inside the ball in a neighborhood of $p$. For convex bodies with smooth boundaries, $\lambda$-convexity is equivalent to having the curvature of the boundary bounded below by $\lambda > 0$. Additionally, within this class of convex bodies, we establish stability for the reverse inradius inequality and the reverse Cheeger inequality. Even without its stability version, the sharp reverse Cheeger inequality is new in dimension $2$.  
\end{abstract}

\maketitle

\section{Introduction}

Let  $\R^n$ be the  $n$-dimensional Euclidean space. A \emph{convex body} is a compact, convex subset of $\R^n$ with non-empty interior.   The classical isoperimetric inequality asserts that among all domains $\Omega \subset \R^n$ of a given surface area $|\partial \Omega|$, the ball has the largest possible volume and it is a unique domain with this property. In this context, it is natural to ask the following \emph{stability question}: 

\smallskip
{\it Is it true that $\Omega$ is close to a ball (say, in the Hausdorff distance) provided that the volume of $\Omega$ is sufficiently close to the volume of the ball of surface area $|\partial \Omega|$?}
\smallskip

Over the years, various quantitative stability versions of the isoperimetric inequality have been established. For example, if the closeness of bodies is measured in the Hausdorff distance, the stability in the isoperimetric inequality is proven by Diskant \cite{Di} and Groemer \cite{Gr2}. Fusco, Maggi, and Pratelli obtained the optimal stability result in terms of the volume difference in \cite{FMP}. The generalizations of the latter stability result to the anisotropic isoperimetric inequality and Brunn--Minkowski inequality were obtained by Figalli, Maggi, and Pratelli in \cite{FiMP, FiMP2}. Recently, the study of stability of geometric and functional inequalities garnered a lot of attention. Such quantitative results are not only important in their own right but also lead to the development of new methods at the interface of geometry and analysis. We refer the reader to the surveys of Groemer \cite{Gr} and Figalli \cite{Fi}, and references therein.

In contrast to the classical problem, the \emph{reverse isoperimetric problem} is concerned with finding a domain with the \emph{smallest} volume among all domains of a given surface area. As stated, the problem has a trivial solution. Thus, a crucial initial step is to identify a natural setting that makes the reverse isoperimetric problem both meaningful and non-trivial. In \cite{Bal, Bal2}, K.~Ball established the reverse isoperimetric inequality in $\mathbb{R}^n$ by restricting the problem to affine classes of convex bodies with the equality case settled later by Barthe \cite{Bar}. A stability version of Ball's inequality for symmetric convex bodies was obtained by B\"or\"oczky, Fodor and Hug in \cite{BFH}, and for general convex bodies in \cite{BH} by B\"or\"oczky and Hug.

Another fruitful setting for tackling the reverse isoperimetric problem is to impose some (local) curvature restrictions on $\partial \Omega$. This approach has received a lot of attention recently. In \cite{HTr}, Howard and Treibergs obtained a reverse isoperimetric inequality with equality characterization for planar curves (not necessarily convex) with curvature $\kappa$ satisfying $|\kappa| \le 1$. 
Chernov, Drach, and Tatarko \cite{CDT} fully resolved the reverse isoperimetric problem in $\R^n$, $n \ge 2$, for convex bodies with principal curvatures of their boundaries uniformly bounded \emph{from above} (the so-called \emph{$\lambda$-concave bodies}). An alternative proof of the latter result and its generalization to the Minkowski relative geometry was obtained by  Saor\'in G\'omez and Yepes Nicol\'as in \cite{SY} (see also \cite{Piotr} and \cite{Jog} for related results).

For the other direction of the curvature bounds, the reverse isoperimetric problem for convex curves with curvature bounded \emph{below} (the so-called \emph{$\lambda$-convex bodies}, see the definition below) was resolved by Borisenko and Drach in \cite{BorDr14}. See also \cite{FKV} for an alternative proof by Fodor, Kurusa, and Vigh without an equality characterization. Recently, in \cite{DrTa}, Drach and Tatarko provided another proof of the latter result and extended it to $\R^3$.

In this paper, we make the first step in the study of the stability of reverse-type isoperimetric inequalities with curvature constraints. Namely, we obtain the stability version of the reverse isoperimetric inequality in $\R^2$ for $\lambda$-convex bodies by exploiting the approach from~\cite{DrTa}. Furthermore, we establish two closely related stability results in the class of planar $\lambda$-convex bodies:

\begin{itemize}
    \item A stability version of the reverse inradius inequality obtained in \cite{DrTa} (see \cite{MilInradius} for the original proof).   
    \item A stability version of the reverse Cheeger inequality;
    the latter inequality, which has not been previously established for the class of $\lambda$-convex bodies, is proven here as an initial step towards addressing stability.   
\end{itemize}

\subsection{Main results}
For a given $\lambda > 0$, a convex body $K\subset \R^n$ is called \emph{$\lambda$-convex} if for every $p \in  \partial K$ there exists a  neighborhood $U_p$ and a ball $B_{\lambda, p}$ of radius $1\slash \lambda$ passing through $p$ in such a way that
$$
U_p \cap K \subset B_{\lambda, p}
$$
(see Figure~\ref{Fig:Def}).
A \emph{$\lambda$-convex lens} in $\R^n$ is the intersection of two balls of radius $1\slash \lambda$. 

\begin{figure}[h!] 
    \centering
     \includegraphics[scale=0.25]{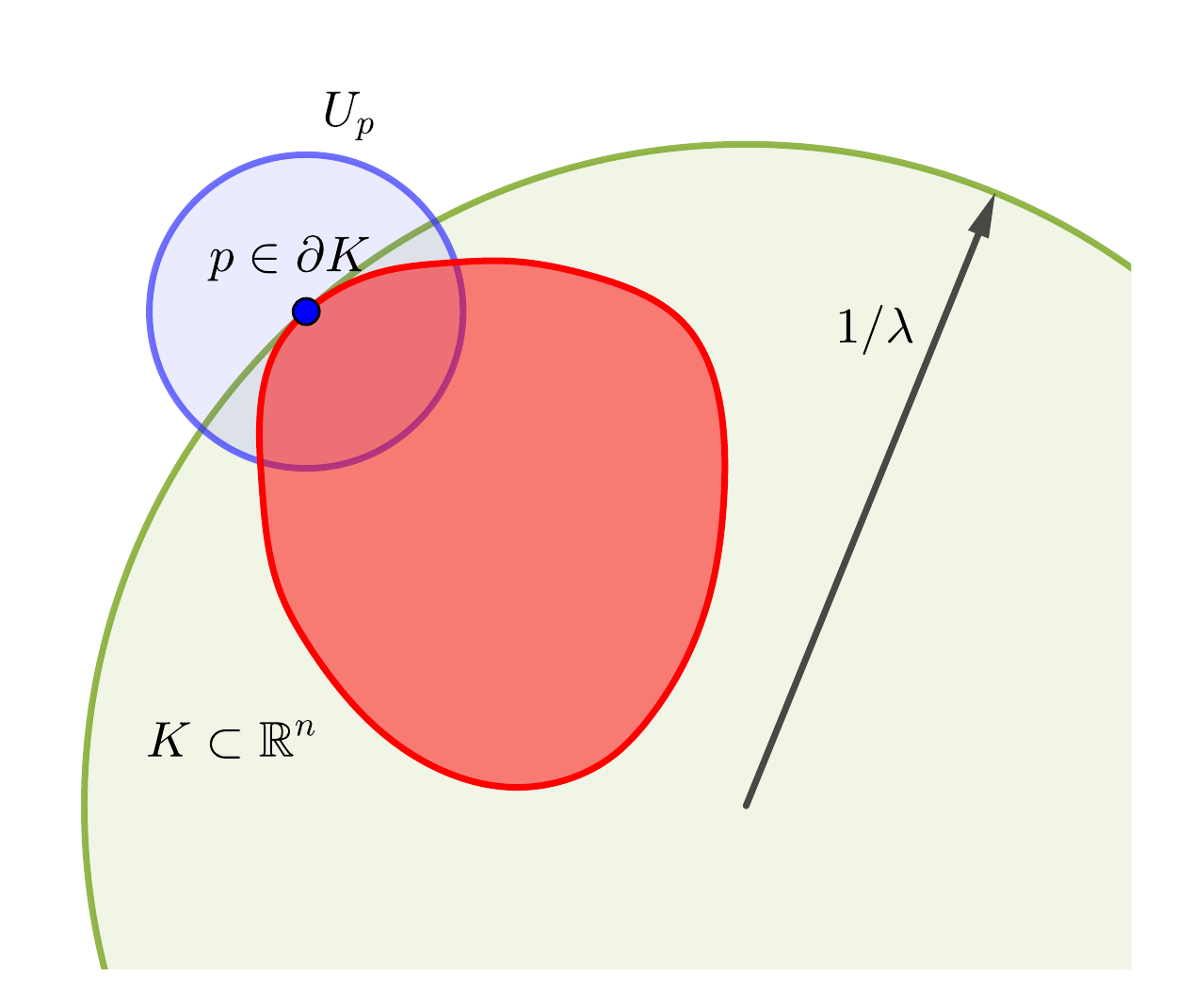}
   \caption{The definition of a $\lambda$-convex body}
    \label{Fig:Def}
\end{figure}

Recently, the class of  $\lambda$-convex bodies has been extensively studied in convex and discrete geometry, for example,  see \cite{BlaLeb, Bezdek2012, FV, JMR, NV, PP}. One of the typical examples of $\lambda$-convex bodies is the finite intersection of Euclidean balls of the fixed radius $1\slash \lambda$, which naturally arise in connection with the Kneser--Poulsen conjecture \cite{Bezdek2008}.  
We also note that the notion of $\lambda$-convexity appeared in the literature under various names such as ``$r$-hyperconvexity", ``spindle convexity", ``ball convexity", and ``$r$-duality". For more details and references, we refer the reader to the nice surveys \cite{BLN, BLNP} about $\lambda$-convexity and related topics.

By Blaschke's rolling theorem (see, e.g., \cite{DrBla} and references therein), if $K \subset \R^n$ is $\lambda$-convex, then $B_{\lambda, p} \supset K$ for every $p \in \partial K$. In particular, $|K| \le |B_\lambda|$ and $|\partial K| \le |\partial B_\lambda|$, where $B_\lambda \subset \R^n$ is a ball of radius $1/\lambda$. Thus we can define a function 
\[
\F \colon \big[0, |\partial B_\lambda|\big] \to \big[0, |B_\lambda|\big], \quad x \mapsto \F(x),
\]
where $\F(x)$ is the volume of a $\lambda$-convex lens of surface area $x$. Using this function, for an arbitrary $\lambda$-convex body $K \subset \R^n$, we define the \emph{lens isoperimetric quotient} as
\[
\Ll(K) := \frac{|K|}{\F(|\partial K|)}.
\] 
Note that $\Ll(L) = 1$ for every $\lambda$-convex lens, and this value is the minimum of $\Ll(K)$ among all $\lambda$-convex bodies as was shown in $\R^2$ \cite{BorDr14, DrTa, FKV}  and in $\R^3$ \cite{DrTa}. We note that the problem of minimizing $\Ll(K)$ among all  $\lambda$-convex bodies remains open in $n \ge 4$. 

\begin{theoremRIP}
For every $\lambda$-convex body $K \subset \R^n, n \in \{2,3\}$, 
\[
\Ll(K) \ge 1,
\]
and equality is possible if and only if $K$ is a $\lambda$-convex lens.
\end{theoremRIP}

One of the main results of this paper is the corresponding stability result on the plane, which says that if a $\lambda$-convex body has its lens quotient sufficiently close to $1$, then the body is close to a lens in  Hausdorff distance.  

\begin{MainTheorem}[Stability in the reverse isoperimetric inequality in $\R^2$]
\label{Thm:A}
For every $\lambda > 0$ there exist $\eps_0  \in  (0,1)$ and $C \ge 1$ such that for every positive $\eps \le \eps_0$ the following holds: 

If $K \subset \R^2$ is a $\lambda$-convex body such that
\begin{equation}
\label{Eq:Approx}
\Ll(K) \le 1+\eps,
\end{equation}
then there exists a $\lambda$-convex lens $L \subset \R^2$ such that
\[
\dH(K, L) \le C \cdot \eps^{1/4},
\]
where $\dH$ is the Hausdorff distance.
\end{MainTheorem}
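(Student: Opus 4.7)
The plan is to associate a candidate lens $L$ to $K$ via the longest chord of $K$, bound the symmetric-difference area $|K\triangle L|$ by $\eps$ using the reverse isoperimetric inequality, and finally convert this area bound into a Hausdorff bound using $\lambda$-convexity. Concretely, let $[p,q]$ be a longest chord of $K$ with $|p-q|=2D$; place coordinates so that $p=(-D,0)$ and $q=(D,0)$. Since $[p,q]$ is a diameter of $K$, the support lines to $K$ at $p$ and at $q$ are perpendicular to $[p,q]$. Blaschke's rolling theorem (as used in the paper) then exhibits two closed disks $B_{\lambda,p}$ and $B_{\lambda,q}$ of radius $1/\lambda$, each containing $K$, whose centers lie on the $x$-axis and whose intersection $L_{\mathrm{out}}:=B_{\lambda,p}\cap B_{\lambda,q}$ is a $\lambda$-convex lens with $K\subseteq L_{\mathrm{out}}$. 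I take the candidate $L$ to be the $\lambda$-convex lens sharing the symmetry axis of $L_{\mathrm{out}}$ and having perimeter exactly $|\partial K|$, so that $|L|=\F(|\partial K|)$.

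\emph{Area bound (the crux).} Combining $K\subseteq L_{\mathrm{out}}$, the hypothesis $|K|\le(1+\eps)\F(|\partial K|)$, the reverse isoperimetric inequality $|K|\ge\F(|\partial K|)=|L|$, and the identity $|L_{\mathrm{out}}|=\F(|\partial L_{\mathrm{out}}|)$, one has $|L_{\mathrm{out}}|-|K|\le\F(|\partial L_{\mathrm{out}}|)-\F(|\partial K|)$ together with $|K|-|L|\le\eps\,|L|$. The task that remains is to bound the perimeter slack $|\partial L_{\mathrm{out}}|-|\partial K|$ (and therefore $|L_{\mathrm{out}}|-|K|$) by $O(\eps)$. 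I would approach this through a second-order expansion of area and perimeter around $L_{\mathrm{out}}$: parametrize $\partial K$ by the tangent angle and write $\rho_K=1/\lambda-\psi$ on the smooth part with $\psi\ge 0$ (the saturated $\lambda$-convexity constraint), so that the deficit $\Ll(K)-1$ becomes, to leading order, a quadratic form in $\psi$. A coercivity estimate (modulo the rigid-motion symmetries already fixed by the longest-chord construction) would yield a bound on $\|\psi\|$ and thus on $|\partial L_{\mathrm{out}}|-|\partial K|$. Combining this with the triangle-type inequality $|K\triangle L|\le(|L_{\mathrm{out}}|-|K|)+|L_{\mathrm{out}}\triangle L|$ one arrives at $|K\triangle L|\le C_1(\lambda)\,\eps$.

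\emph{Hausdorff bound, and the main obstacle.} Since both $K$ and $L$ are $\lambda$-convex, a sagitta estimate converts the area bound into the desired Hausdorff bound: a bulge of Hausdorff height $t$ between two boundaries of curvature at least $\lambda$ has base length $\gtrsim\sqrt{t/\lambda}$ and hence area $\gtrsim t^{3/2}/\sqrt\lambda$, so $\dH(K,L)\lesssim|K\triangle L|^{2/3}\lesssim\eps^{2/3}\le\sqrt\eps$ for $\eps\le 1$. The main obstacle in the program is the coercivity estimate above. It requires careful treatment of the two cusps of $L_{\mathrm{out}}$, where the support function is non-smooth, and of the one-sided constraint $\psi\ge 0$ on the two smooth arcs; however, the finite-dimensional group of rigid motions acting on the family of lenses is already broken by the longest-chord construction, leaving a genuinely coercive reduced variational problem.
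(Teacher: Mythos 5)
There is a genuine gap: the step you yourself flag as ``the crux'' --- bounding $|\partial L_{\mathrm{out}}|-|\partial K|$ (equivalently $|K\triangle L|$) by $O(\eps)$ via a ``second-order expansion and coercivity estimate'' --- is not proved, only announced, and it is precisely where all the quantitative content of the theorem lives. Moreover, the heuristic you give for it is doubtful as stated: the lens is not a smooth interior critical point of $\Ll$ but an extremizer of a one-sidedly constrained problem (the curvature constraint is saturated on the two arcs and the boundary has two cusps), so the deficit $\Ll(K)-1$ is not ``to leading order a quadratic form in $\psi$''; the relevant analysis is a first-order/constrained one. You would also need the coercivity constant to be uniform over the whole family of lenses, which degenerates at both ends ($L$ close to a full disk of radius $1/\lambda$, and $L$ tiny/thin), and your sketch does not address this; the paper spends considerable effort exactly on these degenerate regimes. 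A secondary but real error is the conversion step: curvature bounded \emph{below} by $\lambda$ gives caps of height $t$ with base length $\lesssim\sqrt{t/\lambda}$, not $\gtrsim$, so the claimed area lower bound $\gtrsim t^{3/2}/\sqrt{\lambda}$ fails in general (e.g.\ near a lens cusp the protruding region of height $t$ has area of order $t^2$ times a factor that degenerates for thin lenses). A correct $t^2$-type bound would still yield the target exponent $\sqrt\eps$, but it needs the same uniformity discussion you omit.

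For contrast, the paper takes a different and fully worked route: it first reduces area stability to inradius stability, showing $\Ll(K)\le 1+\eps$ implies $\Rr(K)\le 1+C\eps$ by comparing $|K|=\int_0^{r(K)}|\partial K_t|\,dt$ with the same integral for the lens of equal perimeter, using $|\partial K_t|\ge|\partial L_t|$, concavity of $t\mapsto|\partial K_t|$ in the plane, and the inclusion $\frac{r(K)-r(L)}{r(K)}K\subset K_{r(L)}$ to trap a triangle of area $\gtrsim (r(K)-r(L))^2$ between the two graphs; the Hausdorff conclusion then comes from the separately proved stability of the reverse inradius inequality (Theorem~B), whose proof is a decomposition of a $1$-convex polygon into curvilinear triangles over the inscribed disk with explicit control of the angles. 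Your outer-lens-at-the-longest-chord construction and the reduction of $|K\triangle L|$ to the perimeter slack are reasonable starting points, but until the coercivity estimate is actually established (with uniform constants over degenerating lenses and a correct treatment of the cusps), the proposal does not constitute a proof.
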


Recall that the Hausdorff distance of convex bodies $K, L$ in $\R^n$ is defined as
$$
\dH(K,L) = \min\{\mu \ge 0: \ K \subset L + \mu \B, \ L \subset K + \mu \B \},
$$ 
where $\B$ is the unit Euclidean ball in $\mathbb{R}^n$.

Since for a given $\lambda>0$, the set of $\lambda$-convex domains that contains the origin is compact in the space of all convex sets with respect to the Hausdorff distance, we get the following corollary of Theorem~\ref{Thm:A}:

\begin{corollary}
There exist constants $C>0$ and $\eps_0 > 0$ that depend only on $\lambda$ so that for every $\lambda$-convex body $K \subset \mathbb R^2$ with $\Ll(K) \le 1+\eps_0$, there exists a $\lambda$-convex lens $L \subset \mathbb R^2$ such that
\[
\dH(K, L) \le C \cdot \left(\mathcal I(K) - 1\right)^{{1}/{4}}. 
\]
\end{corollary}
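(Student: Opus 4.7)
My plan is to combine Theorem~\ref{Thm:A} with a trivial diameter bound, splitting into two regimes depending on the size of $\mathcal I(K)-1$. Let $\eps_0\in(0,1)$ and $C_0\ge 1$ denote the constants furnished by Theorem~\ref{Thm:A}; both depend only on $\lambda$. Write $\eps:=\mathcal I(K)-1\ge 0$.

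In the near-equality regime $\eps\le \eps_0$, Theorem~\ref{Thm:A} applies verbatim and produces a $\lambda$-convex lens $L$ with
\[
\dH(K,L)\le C_0\sqrt{\eps}=C_0\sqrt{\mathcal I(K)-1},
\]
which already meets the claimed bound.

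In the far-from-equality regime $\eps>\eps_0$, the task is merely to exhibit \emph{some} $\lambda$-convex lens at uniformly bounded Hausdorff distance from $K$. By Blaschke's rolling theorem, every $\lambda$-convex body $K\subset\R^2$ is contained in a closed ball $B_K$ of radius $1/\lambda$, and $B_K$ is itself a $\lambda$-convex lens (as the intersection of two coincident balls of radius $1/\lambda$). Hence $\dH(K,B_K)\le \diam(B_K)=2/\lambda$, and using $\sqrt{\eps}>\sqrt{\eps_0}$ I obtain
\[
\dH(K,B_K)\le \frac{2}{\lambda}\le \frac{2}{\lambda\sqrt{\eps_0}}\cdot\sqrt{\eps}.
\]

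Setting $C:=\max\bigl\{C_0,\,2/(\lambda\sqrt{\eps_0})\bigr\}$, which depends only on $\lambda$, both regimes give the required bound $\dH(K,L)\le C\,\sqrt{\mathcal I(K)-1}$. I do not anticipate a substantive obstacle: the corollary is essentially the packaging of Theorem~\ref{Thm:A} with the compactness/uniform-diameter remark for $\lambda$-convex bodies that the authors highlight just before its statement. The only minor point worth flagging is the degenerate lens: if one insists on the intersection of two \emph{distinct} balls in the definition of a lens, it suffices to replace $B_K$ by any fixed proper lens contained in $B_K$, which worsens the constant by at most a bounded factor still depending only on $\lambda$.
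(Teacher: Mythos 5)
Your proposal is correct and takes essentially the same route as the paper: the authors obtain the corollary by combining Theorem~\ref{Thm:A} (for $\mathcal I(K)-1\le\eps_0$) with the uniform boundedness of the far-from-equality regime, which they justify by the compactness of $\lambda$-convex bodies in the Hausdorff distance. Your only variation is to replace that compactness remark by the explicit Blaschke rolling bound $\dH(K,L)\le 2/\lambda$ for a lens inside the circumscribed ball of radius $1/\lambda$, which is fine (and yields an explicit constant), and your handling of the degenerate-lens caveat is adequate.
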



Let $\Omega \subset \R^2$ be a closed bounded domain. The \emph{Cheeger constant of $\Omega$} is defined as
\[
h(\Omega) := \inf \limits_{X \subset \Omega} \frac{|\partial X|}{|X|},
\]
where the infimum is taken over all measurable sets $X \subset \Omega$ with rectifiable boundary. The classical inequality due to Cheeger \cite{C} in $\R^2$ can be formulated as: if $B \subset \R^2$ is a Euclidean ball, and 
\[
\text{if }|\partial \Omega| = |\partial B|, \text{ then } h(\Omega) \ge h(B), 
\]
and the equality holds if and only if $\Omega$ is a ball of the same radius as $B$ (see also \cite{Pa}). 

The second main result of the paper is a reverse form of Cheeger's inequality, along with its stability version. In order to state it, we define for a $\lambda$-convex domain $K$ the \emph{lens Cheeger isoperimetric quotient} as
\[
\Ch(K) := \frac{h(K)}{\Hc(|\partial K|)}, 
\]
where $\Hc \colon x \mapsto \Hc(x)$ is the Cheeger constant of the $\lambda$-convex lens of surface area $x$.

\begin{MainTheorem}[Reverse Cheeger inequality and its stability]
\label{Thm:C}
    For every $\lambda$-convex body $K \subset \R^2$, its lens Cheeger isoperimetric quotient satisfies
    \[
    \Ch(K) \le 1,
    \]
    and the equality holds if and only if $K$ is a $\lambda$-convex lens. 
    
    Moreover, there exist $\eps_0 \in (0,1)$ and a constant $C \ge 1$ such that for every positive $\eps \le \eps_0$, if
    \[
    \Ch(K) \ge 1 - \eps,
    \]
    then 
    \[
    \dH(K,L) \le C \cdot \eps^{{1}/{4}}
    \]    
    for some $\lambda$-convex lens $L \subset \R^2$.
\end{MainTheorem}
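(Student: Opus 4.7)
The plan is to deduce Theorem~\ref{Thm:C} from the reverse isoperimetric inequality (Theorem~\ref{Thm:A}) via the classical Kawohl--Lachand-Robert description of the planar Cheeger constant: for a convex body $K\subset\R^2$, $h(K)=1/r^*(K)$, where $r^*(K)\in(0,\infty)$ is the unique positive root of the equation $|K^{-r}|=\pi r^2$; here $K^{-r}:=\{x\in K:\dist(x,\partial K)\ge r\}$ is the inner parallel body, and the Cheeger set of $K$ equals $K^{-r^*(K)}\oplus B_{r^*(K)}$. A crucial structural observation is that if $K$ is $\lambda$-convex with $R=1/\lambda$, then $K^{-r}$ is automatically $1/(R-r)$-convex: the outer supporting ball of radius $R$ at $p\in\partial K$ propagates to an outer supporting ball of radius $R-r$ at $p-r\nu\in\partial K^{-r}$. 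In particular, $L^{-r}$ is a $1/(R-r)$-convex lens with arc-radius $R-r$.

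For the sharp inequality $\Ch(K)\le 1$: writing $r_L^*:=r^*(L)$ and using the ``test-set'' bound $h(K)\le|\partial(K^{-r}\oplus B_r)|/|K^{-r}\oplus B_r|$ valid for $r\in[0,r_{in}(K)]$, together with Steiner's formulas $|K^{-r}\oplus B_r|=|K^{-r}|+r|\partial K^{-r}|+\pi r^2$ and $|\partial(K^{-r}\oplus B_r)|=|\partial K^{-r}|+2\pi r$, an elementary manipulation shows that $\Ch(K)\le 1$ is equivalent to the single inner parallel body inequality $|K^{-r_L^*}|\ge\pi(r_L^*)^2=|L^{-r_L^*}|$; the hypothesis $r_L^*\le r_{in}(K)$ needed for the reduction is supplied by the reverse inradius inequality for $\lambda$-convex bodies (which is also treated in this paper). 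To establish this single comparison I would apply Theorem~\ref{Thm:A} to the $1/(R-r_L^*)$-convex body $K^{-r_L^*}$, obtaining that $|K^{-r_L^*}|$ is at least the area of the $1/(R-r_L^*)$-convex lens with perimeter $|\partial K^{-r_L^*}|$; since the area-of-lens function is non-decreasing in perimeter, the proof reduces to the perimeter comparison $|\partial K^{-r_L^*}|\ge|\partial L^{-r_L^*}|$. Equality propagates through the chain to force $K=L$ in the equality case.

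For the stability: the assumption $\Ch(K)\ge 1-\varepsilon$ gives $r^*(K)\le r^*(L)/(1-\varepsilon)\le r^*(L)(1+2\varepsilon)$. Combining the Cheeger equation $|K^{-r^*(K)}|=\pi(r^*(K))^2$ with the layer-cake decomposition $|K|=|K^{-r^*(K)}|+\int_0^{r^*(K)}|\partial K^{-t}|\,dt$ and the trivial bound $|\partial K^{-t}|\le|\partial K|$, I expect the quantitative estimate $|K|\le|L|(1+C\sqrt{\varepsilon})$, i.e., $\Ll(K)\le 1+C\sqrt\varepsilon$. Applying Theorem~\ref{Thm:A} then yields $\dH(K,L)\le C''\varepsilon^{1/4}$, the claimed rate. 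The intermediate square-root loss, which explains the $\varepsilon^{1/4}$ exponent (rather than $\varepsilon^{1/2}$), arises because $\Ch(K)$ controls area only indirectly, through the cross-sectional integral of inner perimeters.

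The main obstacle is the perimeter comparison $|\partial K^{-r_L^*}|\ge|\partial L^{-r_L^*}|$. At $r=0^+$, the piecewise-smooth Gauss--Bonnet identity $\frac{d}{dr}|\partial K^{-r}|=-\int_{\mathrm{smooth}}\kappa\,ds-2\sum_i\tan(\beta_i/2)$ makes the inequality easy for smooth or mildly-cornered $K$, and the $\lambda$-convexity constraint $\sum_i\beta_i\le 2\pi-4\alpha$ (with $\alpha$ the lens half-aperture defined by $|\partial L|=4R\alpha$) bounds the total corner defect. However, a $\lambda$-convex body with a single very large corner can in principle lose inner perimeter faster than the lens at the outset, so the comparison at $r=r_L^*$ should not be reducible to a pointwise-in-$r$ statement; the correct argument should either integrate over $r\in[0,r_L^*]$ and exploit the area excess $|K|-|L|\ge 0$ coming from Theorem~\ref{Thm:A} as a buffer, or proceed through a variational/extremality argument characterizing the minimizer of $r^*$ among $\lambda$-convex bodies of fixed perimeter.
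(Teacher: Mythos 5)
Your overall frame (the Kawohl--Lachand-Robert characterization of $h$ via the equation $|K_t|=\pi t^2$, together with inner parallel bodies) is the same as the paper's, but both halves of your argument have genuine gaps. For the sharp inequality, your reduction via the test set $K_{t}\oplus t\B$ and Steiner's formula to the single statement $|K_{t_L}|\ge\pi t_L^2$ is sound, and your observation that inner parallel bodies of a $\lambda$-convex body are $1/(R-t)$-convex is correct; but everything then hinges on the perimeter comparison $|\partial K_{t}|\ge|\partial L_{t}|$, which you explicitly leave unproven and even doubt can hold pointwise in $t$. That comparison is exactly the missing ingredient: the paper quotes the pointwise inequality $|\partial K_t|>|\partial L_t|$ for all $t\in(0,r(L)]$ whenever $|\partial K|=|\partial L|$ (see \eqref{Eq:DT}) from \cite[Appendix 1]{DrTa}, and with it $\Ch(K)\le1$ follows at once by integrating to get $|K_t|\ge|L_t|$ and invoking \cite{KLR}; no detour through the reverse isoperimetric inequality applied to $K_{t}$ is needed (and note that what you call ``Theorem~\ref{Thm:A}'' in that step is really the non-stability reverse isoperimetric inequality). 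So the obstacle you flag is a true statement available in the literature, but your proposal neither proves it nor offers a workable substitute, and the large-corner scenario you worry about is excluded precisely by that result, not by anything in your text.

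The stability half also does not close as written. From $t_K\le(1+2\eps)t_L$ and the trivial bound $|\partial K_t|\le|\partial K|$ you get $|K|\le\pi t_K^2+t_K|\partial K|$, but this overshoots $|L|=\pi t_L^2+\int_0^{t_L}|\partial L_t|\,dt$ by a quantity of order one, not of order $\sqrt\eps$ (e.g.\ for $K=L$ the unit disk the bound gives $5\pi/4$ against $|L|=\pi$); the real difficulty is to bound $\int_0^{t_L}\big(|\partial K_t|-|\partial L_t|\big)\,dt$ from above, which is essentially the stability problem itself, so the claimed estimate $\Ll(K)\le1+C\sqrt\eps$ is not justified. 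The paper avoids this entirely: using $\frac{d}{dt}|K_t|=-|\partial K_t|\le-|\partial L_t|=\frac{d}{dt}|L_t|$ it bounds the small quantity $|K_{r(L)}|\le C\eps\,$ (rather than $|K|-|L|$), then the inclusion $\frac{r(K)-r(L)}{r(K)}K\subset K_{r(L)}$ from \cite{HS} gives $\Rr(K)-1\le C\sqrt\eps$, and Theorem~\ref{Thm:B} (inradius stability), not Theorem~\ref{Thm:A}, yields $\dH(K,L)\le C\eps^{1/4}$; a uniform bound on $t_L|\partial L|/|L|$ over all lenses is what makes the constants uniform. To salvage your route you would need either to prove the quantitative area bound by a genuinely different argument or, more simply, to replace the area step by such an inradius estimate, which is exactly the paper's path.
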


\begin{remark}
In the proof of Theorem~\ref{Thm:C}, we will use the relation between the Cheeger constant of a convex body $K$ and the geometry of inner parallel sets of $K$, as shown in \cite{KLR}. Using the same connection, a version of a reverse Cheeger inequality for planar bodies of constant width $w$  was recently proved in \cite{Bog}  (see \cite{HL} for the original proof of this inequality). It was shown that the Reuleaux triangle maximizes the Cheeger constant among all bodies of a given constant width. It is interesting to note that each body of constant width $w$ is $1/w$-convex.   
\end{remark}

Finally, the last main result of the paper is the stability version of the reverse inradius inequality. Recall that for a convex body $K$, its \emph{inradius} $r(K)$ is the radius of a largest ball contained in $K$. By Blaschke's rolling theorem, $r(K) \le 1/\lambda$ provided $K$ is $\lambda$-convex. 

Similar to the lens isoperimetric quotient, for a $\lambda$-convex body $K\subset \R^n$ we can define \emph{lens inradius isoperimetric quotient} as
\[
\Rr(K) := \frac{r(K)}{\mathcal G(|\partial K|)},
\] 
where $r(K)$ is the inradius of $K$ and 
\[
\mathcal G \colon \big[0, |\partial B_\lambda|\big] \to \big[0, 1/\lambda\big], \quad x \mapsto \mathcal G(x),
\]
where $\mathcal G(x)$ is the inradius of a $\lambda$-convex lens of surface area $x$. In \cite{Dr, DrTa, MilInradius} it was shown that for every $\lambda$-convex body $K \subset \mathbb R^n$, its lens inradius quotient satisfies the inequality (the reverse inradius inequality)
\[
\Rr(K) \ge 1.
\]
 Moreover, equality is possible if and only if $K$ is a  $\lambda$-convex lens. We establish the corresponding stability result:

\begin{MainTheorem}[Stability in reverse inradius inequality in $\R^2$]
\label{Thm:B}
For every $\lambda > 0$ there exist $\eps_0 \in (0,1)$ and $C \ge 1$ such that for every positive $\eps \le \eps_0$ the following holds: 

If $K \subset \R^2$ is a $\lambda$-convex body with $\lambda > 0$, such that
\begin{equation}
\label{Eq:Approx}
\Rr(K) \le 1 + \eps,
\end{equation}
then there exists a $\lambda$-convex lens $L \subset \R^2$ such that
\[
\dH(K, L) \le C \cdot \sqrt{\eps}.
\]
\end{MainTheorem}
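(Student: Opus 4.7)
The strategy parallels the proof of Theorem~\ref{Thm:A}. By the scaling $K \mapsto \lambda K$ we reduce to $\lambda = 1$. Set $r := r(K)$, $p := |\partial K|$, and let $L$ be the $1$-convex lens with $|\partial L| = p$, so $r(L) = \mathcal G(p)$. The hypothesis reads $r \le (1+\eps) r(L)$, and the reverse inradius inequality gives $r \ge r(L)$. A direct computation yields $\mathcal G(p) = 1 - \cos(p/4)$, a smooth bijection $(0, 2\pi) \to (0,1)$ with non-vanishing derivative on compact subsets; consequently the lens $\tilde L$ with inradius exactly $r$ differs from $L$ in Hausdorff distance by $O(\eps)$, and by monotonicity of $\mathcal G$ the perimeter defect $|\partial \tilde L| - |\partial K|$ is likewise $O(\eps)$. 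It therefore suffices to prove $\dH(K, \tilde L) \le C \sqrt{\eps}$ for an appropriate rigid placement of $\tilde L$.

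\emph{Normalization and arc decomposition.} Translate so that an inball $B \subset K$ is centred at the origin. The contact set $S := \partial B \cap \partial K$ satisfies $0 \in \mathrm{conv}(S)$. We select $q, q' \in S$ whose chord is closest to the origin, and argue by a stability-of-equality analysis---any substantial deviation from antipodality would permit an inscribed disk strictly larger than $B$, contradicting $r$ being close to $r(\tilde L)$---that $q, q'$ are $O(\sqrt{\eps})$-antipodal. Rotate so that $q, q' \approx (\pm r, 0)$, and place $\tilde L$ with its two antipodal contact points exactly at $(\pm r, 0)$. The chord through $q, q'$ splits $\partial K$ into two $1$-convex arcs $\gamma_U, \gamma_L$ with near-vertical endpoint tangents, while $\partial \tilde L$ is split into two piecewise-circular arcs $\eta_U, \eta_L$ (each consisting of two unit-circle arcs meeting at a lens tip); the combined length defect is $O(\eps)$.

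\emph{The crux.} The main technical lemma is: \emph{if $\gamma$ is a $1$-convex arc joining $(r,0)$ to $(-r,0)$ with vertical endpoint tangents and $|\gamma| \ge |\eta| - \delta$, where $\eta$ is the corresponding lens arc of $\tilde L$, then $\dH(\gamma, \eta) \le C \sqrt{\delta}$.} Applied to $\gamma_U$ and $\gamma_L$ and combined with convexity of $K$ and $\tilde L$, this yields $\dH(K, \tilde L) \le C \sqrt{\eps}$. The $\sqrt{\delta}$ rate is sharp: pivoting one of the two unit-circle arcs of $\eta$ about a fixed endpoint by an angle $\theta$ produces a legal $1$-convex competitor at Hausdorff distance $O(\theta)$ but with length defect only $O(\theta^2)$, establishing the exponent and indicating the scale of deviations that must be controlled.

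\emph{Main obstacle.} The heart of the argument is the quantitative arc-stability lemma with the sharp $\sqrt{\delta}$ rate for \emph{arbitrary} $1$-convex competitors, not merely the explicit rigid deformations above. Our plan is a two-step approach: a Blaschke-selection compactness argument reduces to a neighbourhood of the extremal $\eta$, and then a second-order variation analysis, exploiting the rigidity of the Blaschke rolling theorem, yields the sharp exponent. A secondary combinatorial issue is handling the case $|S| \ge 3$ in the normalization step, which is addressed by the near-antipodality argument above; such configurations cannot arise under the stability hypothesis for sufficiently small $\eps$.
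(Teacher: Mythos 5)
Your overall scaffolding (reduce to $\lambda=1$, convert the inradius hypothesis into an $O(\eps)$ perimeter defect against the lens of the same inradius, then conclude by matching boundary pieces) is consistent with the paper's strategy, but the proposal has a genuine gap: the entire quantitative content is deferred to your ``main technical lemma'' (arc stability with the $\sqrt{\delta}$ rate for arbitrary $1$-convex competitors), and what you offer for it is a plan, not a proof. A Blaschke-selection compactness argument only yields a qualitative statement (for every $\rho>0$ there exists $\delta>0$), and cannot by itself produce the exponent $1/2$; moreover, any compactness step would have to be uniform over the family of target lenses, which degenerates both as $r\to 0$ (thin lens) and as $r\to 1$ (lens tending to the unit disk). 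The ``second-order variation analysis'' is asserted over an infinite-dimensional family of competitors with no coercivity estimate indicated. This is exactly where the paper does its real work: it reduces to $1$-convex polygons, radially projects the sides onto the inscribed circle, uses the inequality \eqref{Eq:KeyClaim} from \cite{DrTa}, and runs explicit monotonicity/convexity estimates on $H(\phi_i)=|F_i|/(r\phi_i)$ to show each contact angle is either $O(\sqrt{\eps}/N)$ or within $O(\sqrt{\eps}/N)$ of $\pi/2$, with exactly four big angles; the $r$-dependence of the constants is tracked explicitly (they blow up like $1/\sqrt{\eps}$), which is why the paper first disposes of the regime $r\ge 1-\sqrt{\eps'}$ separately. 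None of this is replaced by your sketch.

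A second concrete problem is the normalization step. You claim two contact points of the inball must be $O(\sqrt{\eps})$-antipodal because otherwise ``an inscribed disk strictly larger than $B$'' would exist. That implication is false in general: the inball is already non-enlargeable whenever the contact set is not contained in an open half-circle, e.g.\ three contact points at mutual angle $2\pi/3$, a configuration with no nearly antipodal pair. Excluding such configurations requires using the near-equality hypothesis quantitatively --- that is, essentially the stability estimate you are trying to prove --- so as written this step is circular. The paper avoids pre-selecting contact points altogether: it decomposes $K$ into the cones over all sides and lets the big/small angle dichotomy produce the two nearly antipodal pairs; it also needs a separate argument (via the Burago--Zalgaller estimate, Lemma~\ref{Lem:BZ}, relating length defect to Hausdorff distance) to handle the sides of $K$ that do not touch the inball, a case your outline does not address.
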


In our stability theorems above, we do not claim that the dependence on $\eps$ is the best possible. It would be interesting to find the optimal order of $\eps$ (see the discussion in Subsection~\ref{SSec:Optimal}).

The proofs of Theorems~\ref{Thm:A} and~\ref{Thm:C} rely on Theorem~\ref{Thm:B}. Hence, we start by establishing Theorems~\ref{Thm:A} (Section~\ref{Sec:ProofThmA}) and~\ref{Thm:C} (Section~\ref{Sec:ProofThmC}) assuming that Theorem~\ref{Thm:B} is true, and then provide the proof of Theorem~\ref{Thm:B} in Section~\ref{Sec:ProofB}.  

Finally, it seems plausible that the methods of this paper can be used to deduce stability for the full range of results established in \cite{DrTa}, i.e., to show stability in the reverse isoperimetric inequality in $\R^3$ and the reverse inradius inequality in $\R^n$, $n \ge 3$. 

\section{Proof of Theorem~\ref{Thm:A} assuming Theorem~\ref{Thm:B}}
\label{Sec:ProofThmA}
Before proving Theorem~\ref{Thm:A} we need the following preliminary section about inner parallel bodies.

\subsection{Inner parallel bodies in $\mathbb{R}^n$.}
Let $A$ be a convex body in $\R^n$. The inner parallel body $A_t$ at distance $t \geq 0$ is defined as
$$
A_t = A \sim t \B := \{x \in \R^n : x + t \B \subset A\}, 
$$
where $\B \subset \mathbb{R}^n$ is the unit Euclidean ball. The largest $t$ for which $A_t$ is not empty is the inradius $r(A)$ of $A$.

\begin{proposition}
\label{Prop:Concave}
    Let $A$ be a convex body in $\R^2$. The perimeter $|\partial A_t|$ is a decreasing and concave function in $t \in [0, r(A)]$.
\end{proposition}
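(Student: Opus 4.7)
The plan is to deduce both properties from two standard facts about planar convex geometry: the perimeter is monotone under set inclusion of convex bodies ($K \subset L \Rightarrow |\partial K| \le |\partial L|$), and in dimension two it is \emph{Minkowski-linear},
\[
|\partial(\alpha K_1 + \beta K_2)| = \alpha\, |\partial K_1| + \beta\, |\partial K_2|, \qquad \alpha, \beta \ge 0.
\]
Both follow from Cauchy's formula $|\partial K| = \int_0^{2\pi} h_K(\theta)\,d\theta$ together with linearity of the support function under Minkowski addition.

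Monotonicity is then immediate: for $0 \le s \le t \le r(A)$ the definition $A_t = \{x : x + t\B \subset A\}$ gives $A_t \subset A_s$, hence $|\partial A_t| \le |\partial A_s|$.

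For concavity, the key step is the ``concavity inclusion'' for inner parallel bodies,
\[
\tfrac{1}{2} A_{t_1} + \tfrac{1}{2} A_{t_2} \subset A_{(t_1 + t_2)/2}, \qquad t_1, t_2 \in [0, r(A)],
\]
which I would prove in one line by Minkowski-adding $\tfrac{t_1 + t_2}{2}\B$ to the left side and rearranging as $\tfrac{1}{2}(A_{t_1} + t_1\B) + \tfrac{1}{2}(A_{t_2} + t_2\B)$, which lies in $\tfrac{1}{2}A + \tfrac{1}{2}A = A$ by the defining property $A_{t_i} + t_i\B \subset A$ and convexity of $A$. Applying perimeter monotonicity and Minkowski-linearity then yields
\[
|\partial A_{(t_1 + t_2)/2}| \;\ge\; \bigl|\partial(\tfrac{1}{2}A_{t_1} + \tfrac{1}{2}A_{t_2})\bigr| \;=\; \tfrac{1}{2}|\partial A_{t_1}| + \tfrac{1}{2}|\partial A_{t_2}|,
\]
which is midpoint concavity of $t \mapsto |\partial A_t|$. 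Since $t \mapsto A_t$ is continuous in the Hausdorff metric and the perimeter is continuous on the space of planar convex bodies, $t \mapsto |\partial A_t|$ is continuous, and midpoint concavity upgrades to full concavity on $[0, r(A)]$.

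No serious obstacle is anticipated. The only dimension-sensitive ingredient is Minkowski-linearity of the perimeter, which is a genuinely two-dimensional phenomenon; adapting the same strategy to handle the surface area of inner parallel bodies in higher dimensions would require replacing this step, for instance by Brunn--Minkowski-type inequalities for mixed volumes.
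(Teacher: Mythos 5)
Your proof is correct and follows essentially the same route as the paper: the central step in both is the concavity inclusion $\tfrac12 A_{t_1}+\tfrac12 A_{t_2}\subset A_{(t_1+t_2)/2}$ for inner parallel bodies, followed by monotonicity of the perimeter under inclusion and a comparison of perimeters of the Minkowski combination. The only differences are cosmetic: you prove the inclusion directly (the paper cites Schneider, Lemma 3.1.13), and you use exact Minkowski-linearity of the planar perimeter via Cauchy's formula where the paper invokes the generalized Brunn--Minkowski inequality for intrinsic volumes, which in the plane reduces to the same (indeed, an equality) statement.
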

\begin{proof}
The system of inner parallel bodies $t \mapsto A_t$ is concave with respect to the inclusion (see \cite[Lemma 3.1.13]{Sch}). In particular,
$$
\frac12 A_{t} + \frac12 A_{s} \subset A_{\frac{t+s}2} 
$$
for any $t, s \in [0, r(A)]$. Thus,
$$
\frac12  |\partial A_t| + \frac12 |\partial A_s|  \leq \left|\partial \left(\frac12 A_{t} + \frac12 A_{s}\right)\right| \leq \left|\partial A_{\frac{t+s}2}\right| 
$$
where the first inequality follows from the generalized Brunn--Minkowski inequality for the intrinsic volumes (see \cite[Theorem 7.4.5]{Sch}). 
\end{proof}

We note that the concavity property does not necessarily hold in $\R^n$, $n \geq 3$. In particular, a simple computation reveals that $|\partial L_t|$ is convex for $t \in [0, r(L)]$ and for any $\lambda$-convex lens $L \subset \R^3$.

\subsection{Proof of Theorem \ref{Thm:A}}
Let $K$ be a $\lambda$-convex body that satisfies the assumption of Theorem~\ref{Thm:A}. Choose a $\lambda$-convex lens $L$ such that $|\partial L| = |\partial K|$ which implies that $\F(|\partial K|) = |L|$. Hence, by assumption, $\Ll(K) = \frac{|K|}{|L|} \le 1 + \varepsilon$, or equivalently, 
\begin{equation}
\label{Eq:AssThmA}
|K| - |L| \le \varepsilon |L|.    
\end{equation}

We claim that if $\Ll(K) \le 1 + \varepsilon$ then
$\Rr(K) \le 1 + C\sqrt\varepsilon$ for some $C>0$.

\begin{figure}
	\includegraphics[scale=1.1]{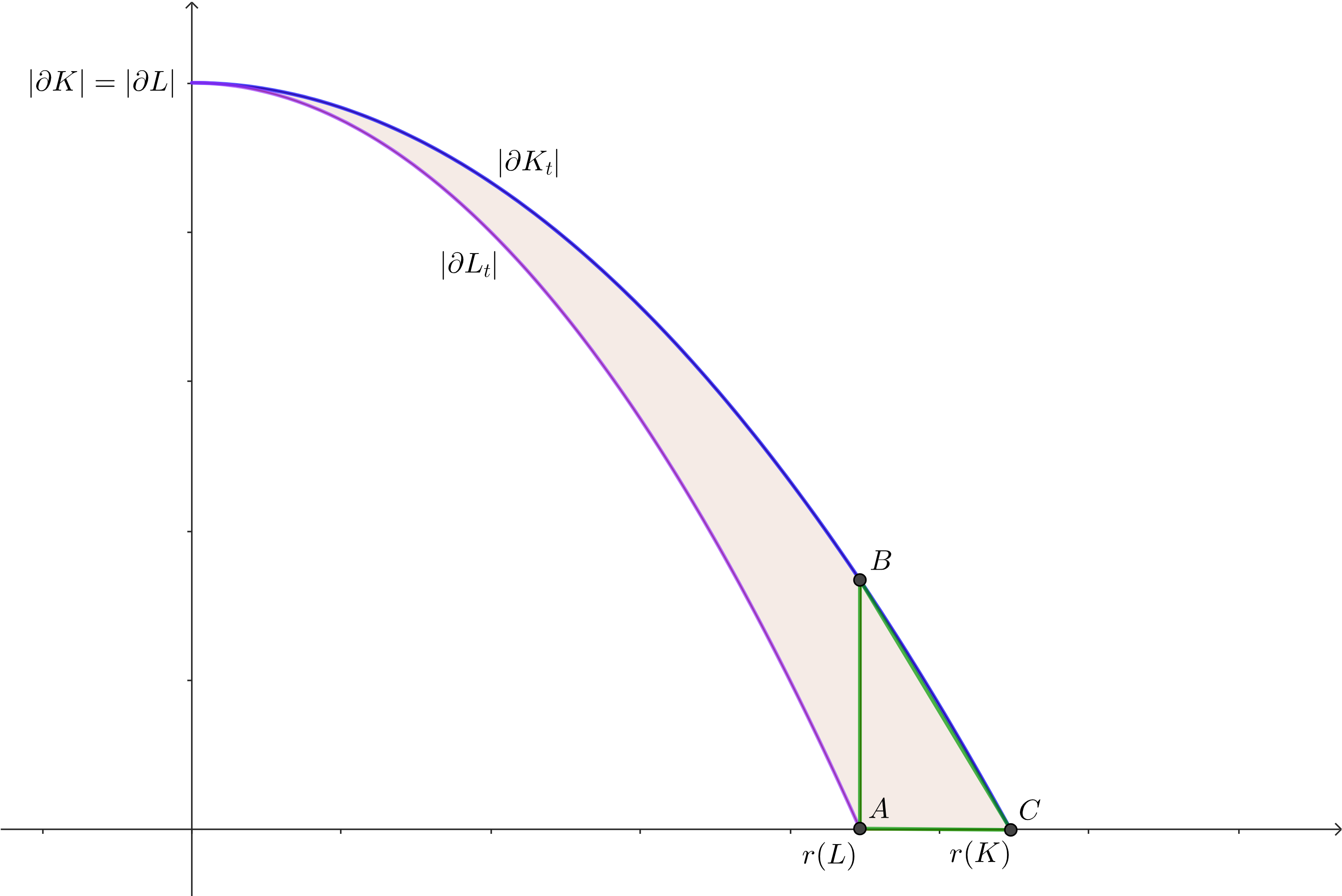}
	\caption{Graphs of $|\partial K_t|$ and $|\partial L_t|$ in the proof of Theorem~\ref{Thm:A}}
	\label{Fig:Stability}
\end{figure}

It was shown in \cite[Appendix 1]{DrTa} that 
\begin{equation}
\label{Eq:DT}
    |\partial K_t| > |\partial L_t| \quad \text{for all} \quad t \in (0, r(L)], \quad \text{provided} \quad |\partial K| = |\partial L|
\end{equation}
(see Figure~\ref{Fig:Stability}). Since the volume of inner parallel body is differentiable (\cite{Ma}), we also have that 
\begin{equation}\label{eq:volume}
	|K| = \int_0^{r(K)} |\partial K_t| \, dt \quad\text{ and }\quad|L| = \int_0^{r(L)} |\partial L_t| \, dt.
\end{equation}
In order to estimate $\Rr(K)$, we consider a triangle $\Delta \, ABC$ where
$A = (r(L),0)$, $B = (r(L), |\partial K_{r(L)}|)$ and $C = (r(K), 0)$ (see Figure~\ref{Fig:Stability}).  Then the area of this triangle is 
$$
|\Delta ABC| = \frac12 (r(K) - r(L)) \cdot |\partial K_{r(L)}|.
$$ 

By Proposition~\ref{Prop:Concave} and \eqref{Eq:DT}, $\Delta ABC$ is completely contained between the graphs of $|\partial K_t|$ and $|\partial L_t|$. By \eqref{Eq:DT} and \eqref{eq:volume}, the area between the graphs $|\partial K_t|$ and $|\partial L_t|$ is $|K|-|L|$.  Therefore, 
\begin{equation}\label{eq:area_upp}
	|\Delta ABC| \le |K| - |L| \le \varepsilon |L| 
\end{equation}
where we used inequality~\eqref{Eq:AssThmA}. Now we estimate the area of $\Delta ABC$ from below 
$$
|\Delta ABC|  \ge  \frac12 \frac{(r(K) - r(L))^2}{r(K)} |
\partial L|
$$
where we used the fact that $\frac{r(K) - r(L)}{r(K)} K \subset K_{r(L)}$ (see \cite[Eq. (2.4)]{HS}), monotonicity of the perimeter, and  $|\partial K| = |\partial L|$. Hence,
$$
 \frac12 \frac{(r(K) - r(L))^2}{r(K)} \le \varepsilon \frac{|L|}{|
\partial L|}.
$$
Simplifying and solving a quadratic inequlaity for $\frac{r(K)}{r(L)}$, we get
\begin{equation}
	\Rr(K) = \frac{r(K)}{r(L)} \le \frac12 \left(2 + \frac{2 |L| \varepsilon}{|\partial L| r(L)} + \sqrt{\left(2 + \frac{2 |L|\varepsilon }{|\partial L| r(L)} \right)^2 - 4} \right) \le 1 + C \sqrt{\varepsilon} 
\end{equation}
where $C = \frac{2 |L|}{|\partial L| r(L)} > 0$. We also observe that $C$ is bounded from above by expressing the area $|L|$ and the inradius $r(L)$ of the lens $L$ in terms of its perimeter $|\partial L|$ (a straightforward computation):
\begin{equation}
\label{Eq:Lens}
C = \frac{2 |L|}{|\partial L| r(L)} = \frac{|\partial L| - 2 \sin \frac{|\partial L|}2}{|\partial L| \left(1 - \cos \frac{|\partial L|}4\right)} < 2.
\end{equation}
By Theorem~\ref{Thm:B}, there exists a constant $\widetilde{C} >1$ and a convex lens $\widetilde{L}$ in $\mathbb{R}^2$ such that $ \dH(K, \widetilde{L}) \le \widetilde{C} \cdot \sqrt[4]{\eps}.$

\subsection{On the optimality of the exponent of $\eps$}
\label{SSec:Optimal}

The optimal exponent in the stability estimate 
\[
\dH(K,L) \le C \cdot \eps^\alpha \quad \text{assuming} \quad \Ll(K) \le 1+\eps
\] should satisfy 
$\alpha \in [1/4,1/2].$
Indeed, the lower bound is given by Theorem~\ref{Thm:A}, and the upper bound is given by the following example. Assume $\lambda=1$, and let $K_0$ be a disk of radius $1-\delta$ for some small $\delta >0$. Then $K_0$ is a $1$-convex body with 
\[
|K_0| = \pi (1-\delta)^2, \qquad |\partial K_0| = 2\pi(1-\delta)
\]
and $\mathcal F(|\partial K_0|) = \pi(1-\delta) - \sin(\pi(1-\delta))$ (compare with \eqref{Eq:Lens}). Hence
\[
\Ll(K_0) = \frac{\pi(1-\delta)^2}{\pi(1-\delta)-\sin(\pi(1-\delta))} = 1+\delta^2+O(\delta^3).
\]
On the other hand, by the symmetry, the Hausdorff distance from $K_0$ to lenses of perimeter $|\partial K_0|$ is minimal for the lens $L_0$ whose center coincides with the center of $K_0$. In this case, $\dH(K_0,L_0) = R(L_0)-(1-\delta)$, where $R(K_0)$ is the half of the diameter of $L_0$. By an explicit computation (using $r(L_0)$, see \eqref{Eq:Lens}), we obtain
\begin{equation*}
\begin{aligned}
    \dH(K_0,L_0) 
    = \sqrt{1-(1-r(L_0))^2}-(1-\delta) = \sin \frac{\pi(1-\delta)}{2}-1+\delta = \delta + O(\delta^2).
\end{aligned}    
\end{equation*}
Hence, if we assume that $\Ll(K_0) \le 1+\eps$, then $\dH(K_0,L_0) \le C\sqrt{\eps}$. Thus, $\alpha \le 1/2$. We conjecture that this should be the optimal value of the exponent.

\section{Proof of Theorem~\ref{Thm:C} assuming Theorem~\ref{Thm:B}}
\label{Sec:ProofThmC}

We will need the following result, which gives an explicit construction for the Cheeger constant of a convex set. Recall that $\Omega_t$ denotes the inner parallel set at distance $t$ of a convex set $\Omega$.

\begin{theorem}[\cite{KLR}]
    \label{Thm:Cheeger}
    Let $\Omega \subset \R^2$ be a convex set. There exists a unique value $t_\star > 0$ such that $|\Omega_{t_\star}| = \pi \cdot t_\star^2$. Then $h(\Omega) = 1/t_\star$.
\end{theorem}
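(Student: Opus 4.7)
The plan is to establish the unique existence of $t_\star$ by a direct monotonicity argument, then to prove the two inequalities $h(\Omega) \le 1/t_\star$ and $h(\Omega) \ge 1/t_\star$ separately.

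\textbf{Existence and uniqueness of $t_\star$.} First I would define $g \colon [0, r(\Omega)] \to \R$ by $g(t) = |\Omega_t| - \pi t^2$. Since $\Omega$ is convex, the map $t \mapsto |\Omega_t|$ is continuous and strictly decreasing on $[0, r(\Omega)]$ (by essentially the Brunn--Minkowski-type arguments already invoked in Proposition~\ref{Prop:Concave}), while $\pi t^2$ is strictly increasing; hence $g$ is continuous and strictly decreasing. Observing that $g(0) = |\Omega| > 0$ and $g(r(\Omega)) = -\pi r(\Omega)^2 < 0$, the intermediate value theorem delivers a unique $t_\star \in (0, r(\Omega))$ with $|\Omega_{t_\star}| = \pi t_\star^2$.

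\textbf{Upper bound $h(\Omega) \le 1/t_\star$.} I would consider the explicit candidate Cheeger set
$$
E_\star := \Omega_{t_\star} + t_\star \B,
$$
which lies inside $\Omega$ by the very definition of the inner parallel body. Applying the planar Steiner formula to $\Omega_{t_\star}$ gives
$$
|E_\star| = |\Omega_{t_\star}| + t_\star \, |\partial \Omega_{t_\star}| + \pi t_\star^2, \qquad |\partial E_\star| = |\partial \Omega_{t_\star}| + 2\pi t_\star.
$$
Substituting the defining identity $|\Omega_{t_\star}| = \pi t_\star^2$ into the first line and simplifying yields $|\partial E_\star|/|E_\star| = 1/t_\star$, which gives $h(\Omega) \le 1/t_\star$.

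\textbf{Lower bound $h(\Omega) \ge 1/t_\star$.} Here I would invoke the standard existence and structure theory for Cheeger minimizers in planar convex domains. By the direct method in $BV$ a Cheeger minimizer $C \subset \Omega$ exists, and a convex-hull/symmetrization argument (using that taking the convex hull inside the convex container $\Omega$ does not increase perimeter and does not decrease area) shows $C$ may be taken convex. The first variation of the Cheeger functional then forces the free boundary $\partial C \cap \mathrm{int}\,\Omega$ to consist of circular arcs of one common radius $\rho := 1/h(\Omega)$, each of whose centers lies at distance $\rho$ from $\partial \Omega$; unwinding this condition shows $C = \Omega_\rho + \rho \B$. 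Plugging this description into the Steiner-formula identity $|\partial C|/|C| = 1/\rho$ exactly as in the upper bound step forces $|\Omega_\rho| = \pi \rho^2$, and the uniqueness of $t_\star$ established in the first step then gives $\rho = t_\star$, hence $h(\Omega) = 1/t_\star$.

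The main obstacle is the structural statement in the third step --- that a convex Cheeger minimizer in a planar convex body is precisely an outer parallel body of a fixed inner parallel body. This requires a careful Euler--Lagrange analysis along the contact set $\partial C \cap \partial \Omega$ (where the Lagrange multiplier becomes a Radon measure) together with a topological argument ensuring that the free circular arcs actually close up into a single Minkowski sum $\Omega_\rho + \rho \B$ rather than some more intricate union.
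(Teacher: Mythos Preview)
The paper does not prove this theorem at all: it is quoted verbatim from \cite{KLR} and used as a black box in Section~\ref{Sec:ProofThmC}. There is therefore nothing in the paper to compare your argument against.

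That said, your sketch is essentially the outline of the original Kawohl--Lachand-Robert proof. Steps one and two are clean and correct (the Steiner computation with $|\Omega_{t_\star}| = \pi t_\star^2$ does collapse to $|\partial E_\star|/|E_\star| = 1/t_\star$ exactly as you write). You have also correctly identified where the real work lies: the structural claim that any Cheeger minimizer in a planar convex body has the form $\Omega_\rho + \rho \B$. In \cite{KLR} this is obtained by first reducing to a convex minimizer, then using the constant-curvature condition on the free boundary together with a careful analysis showing that each free arc meets $\partial \Omega$ tangentially and that the arc centers lie in $\Omega_\rho$; the ``closing up'' you worry about is handled there by a direct geometric argument rather than a measure-theoretic Euler--Lagrange analysis. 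Your proposal is an honest roadmap, but as written it defers precisely the substantive content of the theorem to that last paragraph.
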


\subsection{Proof of the reverse Cheeger inequality}

Let $K$ be a $\lambda$-convex set. Without loss of generality, we set $\lambda = 1$. We start by showing that $\Ch(K) \le 1$ with equality if and only if $K$ is a $1$-convex lens. This is equivalent to showing that 
\begin{equation}
\label{Eq:CAss}
h(K) \le h(L), \text{ provided }L\text{ is a $1$-convex lens with }|\partial K|=|\partial L|.
\end{equation}

For $K$ and $L$ satisfying assumption \eqref{Eq:CAss}, as we saw in Section~\ref{Sec:ProofThmA}, the functions
\begin{equation*}
    \begin{aligned}
        &f_K(t) \colon [0, r(K)] \to \left[0, |\partial K|\right], \quad f_K \colon t \mapsto |\partial K_t|,\\   
        &f_L(t) \colon [0, r(L)] \to \left[0, |\partial L|\right], \quad f_L \colon t \mapsto |\partial L_t|\\  
    \end{aligned}
\end{equation*}
are strictly  decreasing and satisfy $f_K(t) \ge f_L(t)$ for all $t \in [0, r(L)]$. Note that $r(K) \ge r(L)$ and this is a strict inequality unless $K$ is a $1$-convex lens (see the discussion before the statement of Theorem~\ref{Thm:B}). Therefore, the functions
\[
g_K(t) := |K_t| = \int_t^{r(K)} f_K(s) \,\text{d}s, \quad g_L(t) := |L_t| = \int_t^{r(L)} f_L(s) \,\text{d}s
\]
are strictly decreasing and satisfy
\[
g_K(t) \ge g_L(t), \quad \forall t \in [0,r(L)],
\]
with equality possible if and only if $K$ is a $1$-convex lens. 

By Theorem~\ref{Thm:Cheeger}, if $t_L = 1/h(L)$, then $g_L(t_L) = \pi t_L^2$, and this is the unique solution of the equation $g_L(t) = \pi t^2$. Thus, $g_L(t) > \pi t^2$ for all $t \in [0, t_L)$. Since $g_K(t) \ge g_L(t)$, we have that $g_K(t) > \pi t^2$ for all $t \in [0, t_L)$ as well. Hence, the solution $t_K$ of the equation $g_K(t) = \pi t^2$ must satisfy $t_K \ge t_L$. Therefore, $h(K) = 1/t_K \le 1/t_L = h(L)$, as desired. 

Finally, if $t_K = t_L$, then $|K_{t_K}| = |L_{t_L}|$, which implies 
\[
\int_{t_K}^{r(K)} f_K(s) \,\text{d}s = \int_{t_L}^{r(L)} f_L(s) \,\text{d}s.
\]
Since $f_K(s) \ge f_L(s)\ge 0$, the equality of those integrals can only occur when $r(K) \le r(L)$. As was discussed above, $r(K) \ge r(L)$, and hence this is possible only if $K$ is a $1$-convex lens. This proves the reverse Cheeger inequality in Theorem~\ref{Thm:B}.

\subsection{Proof of stability in the reverse Cheeger inequality}

Assume that $\Ch(K) \ge 1 - \eps$. Let $L$ be a $1$-convex lens such that $|\partial K| = |\partial L|$. Then, by definition of the Cheeger quotient and by Theorem~\ref{Thm:Cheeger}, 
\[
\frac{h(K)}{h(L)} = \frac{t_L}{t_K} \ge 1-\eps.
\]
Hence, $t_K - t_L \le \eps \cdot t_K$.

By construction, $g_K$ and $g_L$ are strictly decreasing, convex functions. Indeed, $g_K'' = -f_K' > 0$, and similarly for $g_L$. Moreover, since $f_K(t) \ge f_L(t)$, we get 
\[
g'_K(t) = -f_K(t) \le -f_L(t) = g'_L(t).
\]
Hence, 
$
\int_{t_K}^{r(L)} g_K'(t) dt \le \int_{t_K}^{r(L)} g_L'(t) dt,
$
which gives
\begin{equation}
    \label{Eq:T0}
    g_K(r(L)) \le g_K(t_K) - g_L(t_K),
\end{equation}
where we used that $g_L(r(L)) = 0$.
Let us estimate the right-hand side in the last inequality. We have $g_K(t_K) - g_L(t_K) = (g_K(t_K) - g_L(t_L)) + (g_L(t_L) - g_L(t_K))$. For the second term, using $t_L \ge (1 - \eps)t_K$ and the monotonicity of $g_L$, we obtain:
\[
g_L(t_L) - g_L(t_K) \le g_L((1 - \eps)t_K) - g_L\left(t_K\right) \le \max_{\xi \in [0, r(L)]}\left(-g_L'(\xi)\right) \cdot \eps \cdot t_K \le \eps \cdot t_K \cdot |\partial K|
\]
where we used that $|\partial K| = |\partial L|$. For the first term,
\[
g_K(t_K) - g_L(t_L) = \pi \cdot (t_K^2 - t_L^2) \le 2\pi \cdot \eps \cdot t_K \cdot r(K).
\]
where we again used the fact $t_L \ge (1 - \eps)t_K$ and that $t_K+t_L \le 2 r(K)$.

Putting our estimates together in \eqref{Eq:T0},
\begin{equation}
    \label{Eq:T1}
    g_K(r(L)) \le \eps \cdot t_K \cdot \left(2\pi \cdot r(K) + |\partial K|\right).
\end{equation}

Now, using the inclusion \cite[Eq. (2.4)]{HS} 
\[
\frac{r(K) - r(L)}{r(K)} K \subset K_{r(L)},
\]
we obtain 
\[
\left(\frac{r(K)-r(L)}{r(K)}\right)^2 |K| \le g_K(r(L)).
\]
Recall that $\Rr(K) = r(K)/r(L)$. Hence, the last estimate gives
\[
\Rr(K) - 1 \le \Rr(K) \cdot \sqrt{\frac{g_K(r(L))}{|K|}},
\]
and solving for $\Rr(K)$, we get
\begin{equation}
\label{Eq:1}
    \Rr(K)-1 \le \frac{\sqrt{\frac{g_K(r(L))}{|K|}}}{1-\sqrt{\frac{g_K(r(L))}{|K|}}}.
\end{equation}

Let us estimate ${g_K(r(L))}/{|K|}$ from above. Note that this quotient is at most $1$. Using \eqref{Eq:T1}, we get
\[
\frac{g_K(r(L))}{|K|} \le \eps \cdot t_K \cdot \left(\frac{2\pi r(K)}{|K|} + \frac{|\partial K|}{|K|}\right) \le 2 \eps \cdot t_K \frac{|\partial K|}{|K|} \le \frac{2 \eps}{1 - \eps} \cdot \frac{t_L \cdot |\partial L|}{|L|}, 
\]
because $2\pi r(K) \le |\partial K|$, $|\partial K|/|K| \le |\partial L|/|L|$ and $t_L \ge (1 - \eps)t_K$. Hence, if ${t_L \cdot |\partial L|}/{|L|} \le A$ for some $A > 0$, then in \eqref{Eq:1}, for sufficiently small $\eps \le \eps_0$ (e.g., any $\eps_0 < 1/(2 A)$ will do),
\[
 \Rr(K)-1 \le  \sqrt{\eps} \, \frac{\sqrt{2 A}}{\sqrt{1 - \eps_0} - \sqrt{2 \eps_0 A}}.
\]
Thus, to finish the proof of stability in the reverse Cheeger inequality using Theorem~\ref{Thm:B}, it is enough to show that $\left(t_L |\partial L|\right) / |L|$ is bounded above. 

Let us first show that $\lim_{|\partial L| \to 0} (t_L \cdot |\partial L| / |L|)$ exists. Indeed, since the $g_L(t)$ is a convex function on $[0,r(L)]$, we have 
\[
t_L \le t',
\]
where $t'$ is the first coordinate of the unique intersection of the graph $t \mapsto \pi t^2$ with the line connecting $(0, |L|)$ and $(r(L), 0)$. This line has the equation $t \mapsto -|L|/r(L) \cdot t + |L|$. Thus,
\[
t' = \frac{1}{2\pi} \left(-\frac{|L|}{r(L)} + \sqrt{\frac{|L|^2}{r(L)^2}+4\pi|L|}\right).
\]
By direct computation,
\begin{equation}
\label{Eq:LensBasic}
|L| = \frac{|\partial L|}{2} - \sin \frac{|\partial L|}{2}, \quad r(L) = 1 - \cos\frac{|\partial L|}{4}, 
\end{equation}
and thus, as $|\partial L| \to 0$,
\begin{equation}
    \label{Eq:LAss}
    |L| = \frac{|\partial L|^3}{48}\left(1 + o(1)\right), \quad r(L) = \frac{|\partial L|^2}{32}\left(1 + o(1)\right).
\end{equation}
Hence, as $|\partial L| \to 0$,
\[
t' = \frac{|\partial L|^2}{32}\left(1 + o(1)\right).
\]
Altogether, as $|\partial L| \to 0$,
\[
\frac{t_L \cdot |\partial L|}{|L|} \le \frac{t' \cdot |\partial L|}{|L|} \le \frac{3}{2} \cdot \frac{|\partial L|^2 \cdot |\partial L|}{|\partial L|^3}\left(1 + o(1)\right) = \frac{3}{2} + o(1). 
\]
Hence, there exists $\delta >0$ (explicitly computable) such that
\[
\frac{t_L \cdot |\partial L|}{|L|} \le 2, \quad \text{ if }\quad |\partial L| < \delta.
\]
On the other hand, if $|\partial L| \ge \delta$, then by \eqref{Eq:LensBasic}, $|L| \ge c(\delta) := \delta/2 - \sin(\delta/2)$, and thus 
\[
\frac{t_L \cdot |\partial L|}{|L|} \le \frac{1 \cdot 2\pi}{c(\delta)} = \frac{2\pi}{c(\delta)}.
\]
Therefore, for every $1$-convex lens $L \subset \R^2$,
$
{t_L \cdot |\partial L|}/{|L|} \le \max\{2, {2\pi}/{c(\delta)}\},
$
which is the desired universal bound. We remark that a sharper bound can be obtained using inequality (5) in \cite{Ftohi}.

\section{Proof of Theorem~\ref{Thm:B}}
\label{Sec:ProofB}

In this section, we will prove Theorem~\ref{Thm:B}. We begin by listing some preliminaries and recalling the needed construction from \cite{DrTa}.

Without loss of generality, we assume that $\lambda=1$. Furthermore, by Blaschke's rolling theorem (see, e.g., \cite{DrTa}), it is enough to prove the statement when $K$ is a \emph{$1$-convex polygon}, i.e., the intersection of finitely many disks of radius $1$. The rest would follow by approximation \cite[Proposition 2.5]{DrTa}. 

\subsection{The basic construction and preliminaries}\label{Subsect:basic}

We will need the following property of the Hausdorff distance.

\begin{lemma}[Hausdorff distance from pieces]
\label{Lem:HPieces}
Let $\Omega, \Omega' \subset \R^n$ be a pair of convex bodies that are decomposed into $N$ convex sets
\[
\Omega = \bigcup_{i = 1}^N \Omega_i, \quad \Omega' = \bigcup_{i = 1}^N \Omega'_i
\]
so that $\dH(\Omega_i, \Omega_i') \le \delta$ for every $i$. Then $\dH(\Omega, \Omega') \le \delta$. \qed
\end{lemma}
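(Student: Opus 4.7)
The plan is to unwind the definition of Hausdorff distance and exploit the fact that each piece of the decomposition sits inside the whole. Recall that, by definition, $\dH(A,B) \le \delta$ is equivalent to the pair of inclusions $A \subset B + \delta \B$ and $B \subset A + \delta \B$. So it suffices to verify both inclusions with $(A,B) = (\Omega,\Omega')$.

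First I would prove that $\Omega \subset \Omega' + \delta\B$. Take an arbitrary $x \in \Omega$. Since $\Omega = \bigcup_{i=1}^N \Omega_i$, there is an index $i$ with $x \in \Omega_i$. The hypothesis $\dH(\Omega_i, \Omega_i') \le \delta$ gives $\Omega_i \subset \Omega_i' + \delta \B$, so $x = y + \delta v$ for some $y \in \Omega_i'$ and some $v \in \B$. Since $\Omega_i' \subset \Omega'$, we have $y \in \Omega'$, and hence $x \in \Omega' + \delta \B$, as required.

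The reverse inclusion $\Omega' \subset \Omega + \delta\B$ follows by the symmetric argument, interchanging the roles of $\Omega$ and $\Omega'$ (and using the decomposition $\Omega' = \bigcup_i \Omega_i'$ together with the same pairwise Hausdorff bounds, which are symmetric in the two arguments). Combining the two inclusions yields $\dH(\Omega,\Omega') \le \delta$.

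There is no genuine obstacle here: the statement is essentially a tautology given the definition of $\dH$, and in fact neither the convexity of $\Omega,\Omega'$, $\Omega_i,\Omega_i'$ nor the specific way the sets cover the wholes is used in the proof — only the pointwise inclusions $\Omega_i \subset \Omega$ and $\Omega_i' \subset \Omega'$ matter. The convexity hypothesis presumably reflects the intended application (to $\lambda$-convex bodies and their decompositions in later sections) rather than a necessity of the argument.
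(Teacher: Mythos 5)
Your proof is correct and is exactly the immediate argument the paper has in mind (the lemma is stated with a \qed and no proof, being a direct consequence of the definition of $\dH$ via the inclusions $\Omega_i \subset \Omega_i' + \delta\B \subset \Omega' + \delta\B$ and its symmetric counterpart). Your observation that convexity and the pairing of pieces play no role is also accurate.
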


Furthermore, we will need the following straightforward lemma on the proximity of lenses.

\begin{lemma}[Proximity of lenses]
\label{Lem:Proximity}
There exist $\eps_0 \ge 0$ and $T \ge 1$ so that the following holds for each positive $\eps \le \eps_0$: 

If $L, L' \subset \R^2$ are two $1$-convex lenses such that 
\[
1 \le \frac{r(L)}{r(L')} \le 1 + \eps,
\]
then 
\[
1 \le \frac{|\partial L|}{|\partial L'|} \le 1 + T \eps.
\]
\end{lemma}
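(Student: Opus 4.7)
The plan is to reduce the lemma to a one-variable calculus estimate, exploiting the explicit parametrization of $1$-convex lenses recalled in \eqref{Eq:LensBasic}. A $1$-convex lens $L\subset\R^2$ is determined up to rigid motion by its perimeter $p:=|\partial L|\in[0,2\pi]$, and its inradius equals $r(L)=1-\cos(p/4)$. Writing $p':=|\partial L'|$, the map $p\mapsto 1-\cos(p/4)$ is strictly increasing on $[0,2\pi]$, so the hypothesis $r(L)\ge r(L')$ immediately forces $p\ge p'$, giving the left-hand inequality $|\partial L|/|\partial L'|\ge 1$ for free.

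For the right-hand inequality, the strategy is to bound $p-p'$ from above in terms of $r(L)-r(L')$ and then combine with the equivalent form $r(L)-r(L')\le \varepsilon\,r(L')$ of the hypothesis. The mean value theorem applied to $\cos(\cdot/4)$ on $[p',p]$ produces some $\xi\in[p',p]$ with
\[
r(L)-r(L')=\cos(p'/4)-\cos(p/4)=\tfrac14\sin(\xi/4)(p-p')\ge \tfrac14\sin(p'/4)(p-p'),
\]
where monotonicity of $\sin$ on $[0,\pi/2]$ together with $p/4\le\pi/2$ justifies the last step. Using the half-angle identity $(1-\cos(p'/4))/\sin(p'/4)=\tan(p'/8)$, the estimate rearranges to
\[
\frac{p-p'}{p'}\le \frac{4\varepsilon\,\tan(p'/8)}{p'}.
\]

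The only step that requires any thought, and thus the main (though mild) obstacle, is verifying that the factor $\phi(q):=\tan(q/8)/q$ is uniformly bounded on $(0,2\pi]$. The potential concern is the behaviour as $q\to 0^+$, but the Taylor expansion $\tan(q/8)=q/8+O(q^3)$ gives $\phi(q)\to 1/8$, so $\phi$ extends continuously to the compact interval $[0,2\pi]$ and $M:=\max_{q\in[0,2\pi]}\phi(q)$ is finite. Setting $T:=4M$ and choosing any $\varepsilon_0\in(0,1)$ then yields $|\partial L|/|\partial L'|\le 1+T\varepsilon$, completing the plan.
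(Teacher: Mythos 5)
Your proof is correct, and it follows the same basic strategy as the paper --- reduce the lemma to a one-variable calculus estimate using the explicit inradius--perimeter relation \eqref{Eq:LensBasic} --- but the execution is genuinely different and, in fact, cleaner. The paper parametrizes by the inradius, works with $f(x)=4\arccos(1-x)$, and because $f'(x)\sim 4/\sqrt{2x}$ blows up as $x\to 0$ it is forced into a case split: a Lipschitz bound when $r(L')$ is bounded away from $0$, and a concavity argument plus the asymptotic limit $x'f'(x')/f(x')\to 1/2$ when $r(L')$ is small, producing a non-explicit constant $T$. By working in the perimeter variable and applying the mean value theorem to $\cos(\cdot/4)$, you keep the derivative bounded, the resulting factor $\tan(p'/8)/p'$ extends continuously to $[0,2\pi]$ (it is in fact increasing, so $M=1/(2\pi)$), no case split is needed, and the constant is explicit. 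Two cosmetic points: $p'>0$ since $L'$ is a convex body (non-empty interior), so the division and the nonvanishing of $\sin(p'/4)$ are legitimate and worth a word; and since the statement requires $T\ge 1$, you should set $T:=\max\{1,4M\}$, which is harmless because enlarging $T$ only weakens the conclusion.
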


\begin{proof}
For simplicity of notation, let us put $x := r(L)$, $x' := r(L')$. By \eqref{Eq:LensBasic}, $|\partial L|  = 4 \arccos(1 - r(L)) = 4 \arccos(1 - x) =: f(x)$. If $x' \ge \delta$ for some $\delta > 0$, then $x \ge x' \ge \delta$ and $f(x') \ge f(\delta) = 4 \arccos(1 - \delta)$. Also, $f(x) - f(x') \le Q \cdot (x - x') \le Q \eps$, where $Q$ is the Lipschitz constant of $f(x)$ for $x \in [\delta, 1]$. Therefore, we will get
\[
\frac{f(x)}{f(x')} - 1 \le \frac{Q \eps}{f(\delta)} = T\eps, \quad \text{where}\ \  T= \frac{Q}{f(\delta)}.
\]
In order to prove the result, we need to study the asymptotics of $f(x)/f(x')$ as $x' \to 0$ (and hence, as $x \to 0$). For sufficiently small $x'$, we have:
\[
\frac{f(x)}{f(x')} - 1 = \frac{f(x) - f(x')}{f(x')} \le \frac{f'(x')(x-x')}{f(x')} \le \eps \cdot \frac{f'(x')\cdot x'}{f(x')}.
\]
It is straightforward to check that $\lim_{x' \to 0}{f'(x')\cdot x'}/{f(x')} = 1/2$. Thus, there exists $\delta > 0$ so that, say, ${f'(x')\cdot x'}/{f(x')} \le 1$ for all $0 < x' < \delta$. With such $\delta$, altogether we obtain
\[
\frac{f(x)}{f(x')} - 1 \le \eps \cdot \max \left\{1, \frac{Q}{f(\delta)}\right\},
\]
so the claim follows.
\end{proof}

Let $L$ be a $1$-convex lens such that $r(L) = r(K)$. We denote by $B$ the inscribed ball in $K$ and assume that $B$ is also the inscribed ball for $L$. Rotate $L$ so that $\partial L$ and $\partial K$ share at least one common tangent point to $\partial B$. Let $r = r(K) = r(L)$ be the radius of $B$ and $o$ be its center.

If $L'$ is any $1$-convex lens such that $|\partial K| = |\partial L'|$, then $\mathcal{G}(|\partial K|) = r(L')$ and $\Rr(K) = r(K)/r(L')$. By the assumption of Theorem~\ref{Thm:B} and the reverse inradius inequality, we know that $1 \le {r(K)}/{r(L')} \le 1 + \eps$. Thus, $1 \le {r(L)}/{r(L')} \le 1 + \eps$. By Lemma~\ref{Lem:Proximity}, there exists a constant $T > 1$ such that 
\[
1 \le \frac{|\partial L|}{|\partial L'|} = \frac{|\partial L|}{|\partial K|} \le 1 + T \eps.
\]
Thus, denoting $\eps' := T \eps$, our standing assumption is
\begin{equation}
\label{Eq:Equivalent}
r(K) = r(L) = r, \quad 1 - \eps' \le \frac1{1+\eps'} \le \frac{|\partial K|}{|\partial L|} \le 1.
\end{equation}

By Blaschke's rolling theorem, $r \le 1$. If $r \ge 1 - \sqrt{\eps'}$, then, again by Blaschke's rolling theorem, 
\[
\dH(K, B_1) \le 2 \sqrt{\eps'}, \quad \dH(L,B_1) \le 2 \sqrt{\eps'},
\]
where $B_1$ is any common unit tangent ball to $K$ and $L$ that contains both $K$ and $L$. Hence, by the triangle inequality,
\[
\dH(K,L) \le 4 \sqrt{\eps'} = 4 \sqrt{T} \cdot \sqrt{\eps},
\]
and the claim of the theorem follows with the constant $C = 4\sqrt{T}$. For the remainder of the section, we will assume that
\begin{equation}
\label{Eq:Eps}
r \le 1 - \sqrt{\eps'}.
\end{equation}

Let us recall several constructions from \cite[Section 4]{DrTa} (adapted to $\R^2$). We denote by  
\[
\Pi \colon \R^2 \sm \{o\} \to \partial B 
\]
the radial projection from the center of $B$ to its boundary. For each side $F_i$ of $K$, let $\tilde F_i := \Pi(F_i) \subset \partial B$ be its projection on the inscribed circle. For each side $F$ of $L$, we similarly define $\tilde F = \Pi(F)$. By \cite[Claim 4.3]{DrTa}, for every side $F_i$ of $K$ that touches $B$,
\begin{equation}
\label{Eq:KeyClaim}
\frac{|F_i|}{|\tilde F_i|} \le \frac{|F|}{|\tilde F|}.
\end{equation}
Our main goal will be to study a quantitative version of this inequality.

Note that the lenses in $\R^2$ and in $\R^n$ for $n \ge 3$ have different combinatorial structure. Namely, in dimension $2$, an `edge of the lens' consists of two points, and hence is disconnected, while in dimensions $n \ge 3$, an edge is the intersection of two spherical caps, and hence is connected.

\subsection{Conditional proof of Theorem~\ref{Thm:B}} 
\label{SSec:UnderAss}

Assume first that 
\begin{equation}
\label{Eq:Touching}
\text{each side of $K$ touches $B$}.
\end{equation}
Later we will show how to reduce the general problem to this situation. In $\R^2$, each side $F_i$ is an arc of a unit circle that touches the inscribed disk $B$. We will call them \emph{sides}, and suppose there are $N$ of them. For technical reasons, we will assume that each side starts at a vertex of $\partial K$ and terminates at the touching point. In this way, $N \ge 4$ and each lens has four isometric sides, each denoted by $F$. To each side $F_i$, we can associate the angle $\phi_i$ in $B$ that spans $\tilde F_i$. Note that $F$ corresponds to the angle $\pi/2$. Define a function
\[
H(\phi_i) := \frac{|F_i|}{|\tilde F_i|} = \frac{|F_i|}{r \phi_i}.
\] 
We can naturally extend $H$ to all angles $\phi \in [0, \frac{\pi}2]$ by defining it as the ratio of the length of an arc of a unit circle subtended by an angle $\phi$ and touching $B$,  to the length of its radial projection, namely $r\phi$. In $\mathbb{R}^2$, $H(\phi)$ can be written explicitly as
\[
H(\phi) = \frac{1}{r} - \frac{\arcsin\big((1-r)\sin \phi \big)}{r \cdot \phi},
\]
and $H(0) = 1.$ By \eqref{Eq:KeyClaim}, 
\begin{equation}
\label{Eq:Bound}
H(\phi_i) \le H(\pi/2) \qquad \text{for all} \  i \in \{1, \ldots, N\}.
\end{equation}
By assumption~\eqref{Eq:Touching}, we have
\begin{equation}
\label{Eq:Sum}
\sum_{i = 1}^N \phi_i = 2\pi.
\end{equation}
Furthermore, by \eqref{Eq:Equivalent}, 
\[
4|F| (1 - \eps') \le \sum_{i=1}^N |F_i|.
\]
Hence,
\[
4|F| (1 - \eps') \le \sum_{i=1}^N r \phi_i \cdot H(\phi_i) \overset{\eqref{Eq:Bound}}{\le} 2 \pi r \cdot H(\pi/2) = 4 |F|.
\]
Therefore, $\sum_{i=1}^N r \phi_i \cdot \big(H(\pi/2) - H(\phi_i)\big) \le \eps' \cdot 4|F|$, and hence,
\begin{equation}
\label{Eq:Alternative}
\sum_{i=1}^N \phi_i \cdot (H(\pi/2) - H(\phi_i)) \le \frac{\eps' \cdot 4|F|}{r}. 
\end{equation}

Our goal now is to conclude that only four angles
can be large, and thus of size comparable to $\pi/2$, while the other angles must be of order $\eps |F|$.

By \eqref{Eq:LensBasic}, we have
\[
|F| = \arccos(1-r).
\]

Observe that the function $H$ is monotonically increasing on $[0, \pi/2]$. 
Then there exists a suitable constant $C = C(r)>0$ such that 
\begin{equation}
\label{Eq:Side1}
\text{if } \phi \le \pi/6, \text{ then }H(\pi/2)-H(\phi) \ge 1/C(r). 
\end{equation}
One can compute explicitly that we can choose
\[
C(r) = \frac{\pi}{2} \cdot \frac{r}{3\arcsin\left(\frac{1-r}{2}\right) -\arcsin(1-r)}.
\]
Furthermore, since the function $H(\phi)$ is convex on $[0, \pi/2]$, we can deduce that
\[
H(\phi) \le \frac{2\big(\pi(1 - r) - 2\arcsin(1-r)\big)}{\pi^2 r} \phi + 1,
\]  
where the right-hand side is the equation of the line through points $(0, H(0))$ and $(\frac{\pi}2, H(\frac{\pi}2))$. Therefore, there exists a suitable constant $D = D(r) > 0$ such that for every $0< \delta < H(\pi/2)-H(0)$,
\begin{equation}
\label{Eq:Side2}
\forall \phi \in [0, \pi/2] \text{ such that }H(\pi/2) - H(\phi) \le \delta, \text{ it follows that }\pi/2- \phi \le \delta \cdot D(r).  
\end{equation}
Explicitly, we can choose
\[
D(r) = \frac{\pi^2 r}{2\big(\pi(1 - r) - 2\arcsin(1-r)\big)}.
\]

Now we are ready to analyze \eqref{Eq:Alternative}. Since each term in the sum is non-negative, we have the following estimate:
\begin{align*}
    \frac{\eps' \cdot 4|F|}{r} \ge \sum_{i=1}^N \phi_i \cdot (H(\pi/2) - H(\phi_i)) \ge  \sum_{\phi_i \le \frac{\pi}6} \phi_i \cdot (H(\pi/2) - H(\phi_i)) \ge \sum_{\phi_i \le \frac{\pi}6} \phi_i \, \frac1{C(r)}
\end{align*}
where we used \eqref{Eq:Side1}. Hence,
$$
\sum_{\phi_i \le \frac{\pi}6} \phi_i \le \eps' \cdot 4|F| \cdot \frac{C(r)}{r} = \eps' \cdot \tilde C(r)
$$
where $\tilde C(r) : =4|F| \cdot \frac{C(r)}{r}.$ Now we bound $\tilde C(r)$. Explicitly, we have
\[
\tilde C(r) = \frac{2\pi \arccos(1-r)}{3\arcsin\left(\frac{1-r}{2}\right) -\arcsin(1-r)},
\]
which is a strictly increasing function on $[0, 1-\sqrt{\eps'}]$ (here we are taking into account the bound~\eqref{Eq:Eps}). Hence,
\[
\tilde C(r) \le \tilde C\big(1-\sqrt{\eps'}\big) \le \frac{2\pi^2}{\sqrt{\eps'}}
\] 
(the last estimate follows by a straightforward tracing of the asymptotics at $0$ and noting that the function $\tilde C(1-\sqrt{\eps'})$ is monotone in $\eps'$). Combining all together, we obtain the following: 
\begin{equation}
\label{Eq:BS1}
\sum_{\phi_i \le \frac{\pi}6} \phi_i \le 2 \pi^2 \sqrt{\eps'}, \quad \textup{and} \quad \text{if } \phi_i \le \frac{\pi}{6}, \text{ then }\phi_i \le 2 \pi^2 \sqrt{\eps'}.
\end{equation}

On the other hand, if $\phi_i \ge \pi/6$, then 
\[
\sum_{\phi_i \ge \frac{\pi}6}  (H(\pi/2) - H(\phi_i)) \le \eps' \cdot \frac{24 |F|}{\pi r},
\]
and thus, $H(\pi/2) - H(\phi_i) \le \eps' \cdot \frac{24 |F|}{\pi r}$. By \eqref{Eq:Side2},
\[
\pi/2 - \phi_i \le \eps' \cdot D(r) \cdot \frac{24 |F|}{\pi r} = \eps' \cdot \tilde D(r), 
\]
where $ \tilde D(r) := D(r) \, \frac{24 |F|}{\pi r}$. Similarly to $\tilde C(r)$, now we bound $\tilde D(r)$. We have
\[
\tilde D(r) = \frac{12\pi \arccos(1-r)}{\pi(1 - r) - 2\arcsin(1-r)},
\]
which is a strictly increasing function, and hence for $r \in [0, 1-\sqrt{\eps'}]$,
\[
\tilde D(r) \le \tilde D\big(1-\sqrt{\eps'}\big) \le \frac{6\pi^2}{\pi - 2} \cdot \frac{1}{\sqrt{\eps'}}
\]
(again, the last estimate follows by the monotonicity and a straightforward analysis of asymptotics at $0$ for $\tilde D(1-x)$). Altogether, this yields the following:
\begin{equation}
\label{Eq:BS2}
\text{if }\phi_i \ge \frac{\pi}{6}, \text{ then }\phi_i \ge \frac{\pi}{2} - \frac{6\pi^2}{\pi - 2} \cdot \sqrt{\eps'}.
\end{equation}

Choosing the largest constants and returning back to the original $\eps$, we have obtained the following: there exists a constant $\tilde T > 1$ such that for each $i \in \{1, \ldots, N\}$,
\[
\text{either}\quad \phi_i \le \sqrt{\eps} \cdot \tilde T, \quad \text{or} \quad\phi_i \ge \frac{\pi}{2} - \sqrt{\eps} \cdot \tilde T.
\]
If the first estimate is true, we call the angle $\phi$ \emph{small}; if the second estimate is true, we call the angle $\phi_i$ \emph{big}.

Note that angles $\phi_i$ must satisfy \eqref{Eq:Sum} and also $\phi_i \in (0, \pi/2]$. If all but at most three of them are small, then the sum of the angles is at most
\[
\sum_{i=1}^N \phi_i \le \sum_{\phi_i \le \frac{\pi}6} \phi_i + 3 \cdot \frac{\pi}{2} \overset{\eqref{Eq:BS1}}{\le}  \sqrt{\eps} \cdot \tilde T + 3 \cdot \frac{\pi}{2} < 2\pi
\]
for $\eps$ sufficiently small (e.g., if we choose $\eps \le \eps_0 < \pi^2/ (4 \tilde T^2)$) which is impossible due to \eqref{Eq:Sum}. Therefore, at least four angles must be big.  

On the other hand, if at least five angles are big, then
\[
\sum_{i=1}^N \phi_i \ge \frac{5\pi}{2} - 5 \cdot \sqrt{\eps} \cdot \tilde T = 2\pi + \frac{\pi}{2} - 5 \cdot \sqrt{\eps} \cdot \tilde T> 2\pi,
\]
again, for $\eps$ sufficiently small (e.g., if $\eps \le \eps_0 < \pi^2 / (100 \,\tilde T^2)$) which is again impossible due to \eqref{Eq:Sum}. Thus, exactly four angles are large, while the remaining $N-4$ angles are small.

Without loss of generality, we can assume that $\phi_1$ is big. In this case, either $\phi_2$ or $\phi_N$ must be big too. Without loss of generality, we can assume that $\phi_2$ is big. Further assume that angles $\phi_s$ and $\phi_{s+1}$ are big for some $s \in \{3, \ldots, N-1\}$.

For each angle $\phi_i$, denote the intersection of $K$ with the closed cone over $F_i$ with the vertex at $o$ by $\Omega_i$ (recall that $o$ is the center of the inscribed disk). In this way, $\Omega_i$ is the curvilinear triangle, one of its sides being $F_i$ and $\phi_i$ being the opposite angle to this side. We have $K = \displaystyle\bigcup_{i=1}^N \Omega_i$.

We perform a similar construction for the lens $L$, namely, we decompose $L$ into four curvilinear triangles $\tilde \Omega_j$, $j \in \{1,\ldots, 4\}$ with the right angles,  numbered in such a way that $\tilde \Omega_1 \cup \tilde \Omega_2$ contains one vertex of the lens, while $\tilde \Omega_3 \cup \tilde \Omega_4$ contains the other vertex. 

By construction, both $\Omega_1 \cup \Omega_2$ and $\Omega_s \cup \Omega_{s+1}$ are $\sqrt{\eps}$-close to the `half' of the lens, which implies the following claim.

\begin{claim}
There exists a constant $C > 0$ and a rotation of the lens $L$ around $o$ such that after the rotation
\[
\dH\big(\tilde \Omega_1 \cup \tilde \Omega_2, \Omega_1 \cup \Omega_2\big) \le C \sqrt{\eps}, \quad \dH\big(\tilde \Omega_3 \cup \tilde \Omega_4, \Omega_s \cup \Omega_{s+1}\big) \le C \sqrt{\eps}.
\] 
\end{claim}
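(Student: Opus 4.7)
The plan is to choose a single rotation of $L$ around $o$ that aligns the lens vertex $v_+$ with the shared vertex $V_+$ of $F_1$ and $F_2$, and then bound the two Hausdorff distances separately by applying Lemma~\ref{Lem:HPieces} to the natural pairings $\Omega_i\leftrightarrow\tilde\Omega_i$. By the alternation of shared-endpoint types among consecutive half-sides, we may cyclically shift the labels so that $(F_1,F_2)$ share a vertex $V_+$ of $K$; in the geometrically relevant regime (mirroring the two-vertex structure of the lens), the second big pair $(F_s,F_{s+1})$ also shares a vertex $V_-$ of $K$.

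For the first pair, I would control $\dH(\Omega_i,\tilde\Omega_i)$ for $i\in\{1,2\}$ by three ingredients. First, the cones at $o$ of $\Omega_i$ and $\tilde\Omega_i$ have angular widths $\phi_i$ and $\pi/2$, differing by at most $\tilde T\sqrt{\eps}/N$ thanks to~\eqref{Eq:BS2}; since the radial extent of each region is bounded by a universal constant (Blaschke's rolling theorem), this contributes $O(\sqrt{\eps}/N)$ to the Hausdorff distance of the bounding rays. Second, the vertex $V_+$ is the intersection of the two unit circles carrying $F_1$ and $F_2$, tangent to $\partial B$ at points whose angular positions differ from those of $p_+, p_-$ by $O(\sqrt{\eps}/N)$; solving the quadratic for the intersection (of the form $d^2+2d(1-r)\cos\phi_i=r(2-r)$) yields $|V_+-v_+|=O(\sqrt{\eps}/N)$, and the touching points of $F_1, F_2$ on $\partial B$ are similarly $O(\sqrt{\eps}/N)$-close to $p_+, p_-$. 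Third, the arcs $F_i$ and $\tilde F_i$ lie on unit circles whose centers (at distance $1-r$ from $o$ in directions determined by the tangency points) differ by $O(\sqrt{\eps}/N)$, so the arcs are $O(\sqrt{\eps}/N)$-Hausdorff close. Combining these gives $\dH(\Omega_i,\tilde\Omega_i)=O(\sqrt{\eps}/N)$, and Lemma~\ref{Lem:HPieces} then yields $\dH(\Omega_1\cup\Omega_2,\tilde\Omega_1\cup\tilde\Omega_2)\le C\sqrt{\eps}/N\le C\sqrt{\eps}$.

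The second pair $(\Omega_s\cup\Omega_{s+1},\tilde\Omega_3\cup\tilde\Omega_4)$ is handled by the same three-ingredient argument, with one crucial difference: the rotation that aligns $v_+$ with $V_+$ leaves $v_-$ misaligned with $V_-$ by an angle accumulating from the up to $N-4$ small angles $\phi_3,\ldots,\phi_{s-1}$, for a total of $O(\sqrt{\eps})$ rather than $O(\sqrt{\eps}/N)$ (each small $\phi_i$ contributes $O(\sqrt{\eps}/N)$ by \eqref{Eq:BS2}, summing to at most $O(\sqrt{\eps})$ across all small angles). The three ingredients each now contribute $O(\sqrt{\eps})$, and Lemma~\ref{Lem:HPieces} gives $\dH(\Omega_s\cup\Omega_{s+1},\tilde\Omega_3\cup\tilde\Omega_4)\le C\sqrt{\eps}$ as required.

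The main obstacle is ensuring that all multiplicative constants in the three ingredients are uniform in $r\in(0,1-\sqrt{\eps'}]$; the explicit expressions for $|V_+-v_+|$ and for the unit-circle center displacements depend on $1-r$ and $\sqrt{r(2-r)}$, which admit uniform bounds precisely thanks to the lower bound $1-r\ge\sqrt{\eps'}$ from~\eqref{Eq:Eps}, in direct analogy with the bounds on $\tilde C(r)$ and $\tilde D(r)$ derived earlier. A secondary technical point is confirming the combinatorial setup that both adjacent big pairs of $K$ share vertices (rather than touching points), which is what places us in the configuration matching the vertex-sharing pairings of $L$ demanded by the statement.
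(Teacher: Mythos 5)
Your overall route is the same as the paper's: the paper dismisses the Claim in one sentence, asserting that $\Omega_1\cup\Omega_2$ and $\Omega_s\cup\Omega_{s+1}$ are each $\sqrt{\eps}$-close to a ``half'' of the lens, and your proposal is a legitimate quantitative fleshing-out of exactly that idea (rotate $L$ about $o$ to align one vertex, control the vertex, the tangency points and the supporting unit circles to within a constant times the angular errors from \eqref{Eq:BS1}--\eqref{Eq:BS2}, and absorb the angular drift of the second big pair). These estimates are sound, and your uniformity worry is not an issue: all the displacements you list are bounded by absolute constants times the angular errors, since $1-r\le 1$ and $r\le 1$; the only $r$-dependence that could blow up is already packaged into \eqref{Eq:BS1}--\eqref{Eq:BS2} via \eqref{Eq:Eps}.

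The one point you cannot leave as a ``secondary technical point'' is the combinatorial fact that each big pair, $F_1,F_2$ and $F_s,F_{s+1}$, meets in a \emph{vertex} of $K$ rather than in a touching point. The pairing demanded by the statement matches $\Omega_1\cup\Omega_2$ with the half of $L$ cut along the axis through the two touching points (the half containing one lens vertex). If instead $F_1,F_2$ were the two halves of a single unit arc (sharing a touching point), then $\Omega_1\cup\Omega_2$ would be close to the half of $L$ cut along the axis through the two \emph{vertices}, and the two kinds of halves are at Hausdorff distance comparable to $\sqrt{r(2-r)}$ from each other, so the claimed inequality would fail for that pairing; a cyclic relabelling alone does not exclude this, so this configuration must be ruled out, not assumed. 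Fortunately there is a short argument: for any unit circle tangent to $B$ at a point $Q$ (with $B$ inside the corresponding disk), the law of cosines gives $\cos\angle QoX=\bigl(1-|oX|^2-(1-r)^2\bigr)/\bigl(2|oX|(1-r)\bigr)$ for a point $X$ on the circle, i.e.\ the angle subtended at $o$ between the touching point and $X$ is the \emph{same} strictly monotone function of $|oX|$ for every side. Hence two sides of $K$ meeting at a common vertex $V$ subtend equal angles at $o$; in particular the vertex-neighbour of a big side is big, so the four big sides pair up across vertices of $K$, which is precisely the configuration your proof (and the paper's one-line proof, as well as the earlier unproven assertion that $\phi_2$ or $\phi_N$ must be big) relies on. Two minor corrections: for the small angles you should invoke \eqref{Eq:BS1}, not \eqref{Eq:BS2}; and since each $\phi_i\le\pi/2$ while each big angle is at least $\pi/2-\tilde T\sqrt{\eps}/N$, the small angles in fact sum to at most $4\tilde T\sqrt{\eps}/N$, so the drift of the second pair is $O(\sqrt{\eps}/N)$ --- your cruder $O(\sqrt{\eps})$ bound of course also suffices for the Claim.
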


Now, the theorem follows by Lemma~\ref{Lem:HPieces}, because $\displaystyle\bigcup_{i =3}^{s-1} \Omega_i$ and $\displaystyle\bigcup_{i =s+1}^{N} \Omega_i$ are $C'\sqrt{\eps}$-close to a pair of segments (for some other constant $C'>0$).

\subsection{Proof of Theorem~\ref{Thm:B} without assuming~\eqref{Eq:Touching}}

Let $K$ be a $1$-convex polygon, and $B$ be its inscribed disk of radius $r$. Starting with $K$, we build a $1$-convex polygon for which \eqref{Eq:Touching} is satisfied as follows. The polygon $K$ is the intersection of finitely many unit disks. Remove from this intersection those disks that do not touch $B$. In this way, we will get a polygon $\tilde K$ such that
\begin{equation}
\label{Eq:InitialConstruction}
r(K) = r(\tilde K) = r, \quad |\partial K| < |\partial \tilde K|,
\end{equation}
and that satisfies Assumption \eqref{Eq:Touching}.

Since $|\partial K| < |\partial \tilde K|$, we have that $\mathcal G(|\partial K|) < \mathcal G(|\partial \tilde K|)$. Therefore,
\[
\Rr\big(\tilde K\big) = \frac{r}{\mathcal G(|\partial \tilde K|)}  < \frac{r}{\mathcal G(|\partial K|)} = \Rr (K) \le 1 + \eps.
\]

By the result above, we can choose a $1$-convex lens $L$ so that
\begin{equation}
\label{Eq:Est}
r(L) = r, \quad 1 - \eps \le \frac{|\partial K|}{|\partial L|} \le \frac{|\partial \tilde K|}{|\partial L|} \le 1,
\end{equation}
and 
\begin{equation}
\label{Eq:Cor}
\dH\big(\tilde K, L\big) \le C \sqrt{\eps}.
\end{equation}

Thus, by \eqref{Eq:Est} and \eqref{Eq:InitialConstruction}, $1 - \eps \le {|\partial K|}/{|\partial \tilde K|} < 1$, and hence 
\begin{equation}
\label{Eq:Starting}
|\partial \tilde K| - |\partial K| \le \eps \cdot |\partial \tilde K|.
\end{equation}
Using these estimates, we show that 
\begin{equation}
\label{Eq:Goal}
\dH(K, \tilde K) \le C \cdot \sqrt{\eps},
\end{equation}
for some constant $C > 0$. Together with \eqref{Eq:Cor}, this will yield the conclusion of Theorem~\ref{Thm:B}.

By construction, $K \subset \tilde K$. Consider the closure $T$ of a connected component of $\tilde K \sm K$. It follows that $T$ is a curvilinear non-convex polygon bounded by circular arcs of radius $1$. Exactly two adjacent sides of $T$ are coming from $\partial \tilde K$; call these sides $\ell_-$ and $\ell_+$. By \eqref{Eq:Starting},
\begin{equation}
\label{Eq:BE}
|\ell_- \cup \ell_+| - |\partial T \sm (\ell_- \cup \ell_+)| \le \eps \cdot |\partial \tilde K| < 2\pi \eps. 
\end{equation}
In order to prove~\eqref{Eq:Goal}, it is enough to conclude that for each such component $T$,
\begin{equation}
    \label{Eq:Goal2}
    \dH(\ell_- \cup \ell_+, \partial T \sm (\ell_- \cup \ell_+)) \le C' \cdot \sqrt{\eps},
\end{equation}
for some  constant $C' > 0$.

We need the result proven in \cite[Section 6.7.2]{BZConvex} which we state in its weaker form that is enough for our purposes:

\begin{lemma}
\label{Lem:BZ}
    Let $\Omega \subset \R^2$ be a convex body, and $\gamma \subset \partial \Omega$ be a curve connecting two boundary points $x, y \in \partial \Omega$. Assume that the integral curvature of $\gamma$ is at most $\pi$. Let $\gamma' \subset \R^2 \sm \inter \Omega$ be the rectifiable arc connecting $x$ and $y$, and otherwise disjoint from $\partial \Omega$, so that $\gamma' \cup [x,y]$ bounds a convex domain. Then there exists a constant $C > 0$ such that
    \[
    |\gamma'|^2 \ge |\gamma|^2 \cdot \left(1 + C \cdot \left(\dH(\gamma', \gamma)\right)^2\right) + 4 \left(\dH(\gamma', \gamma)\right)^2. 
    \]
\end{lemma}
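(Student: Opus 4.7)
My plan is to prove the inequality via a convex-hull reduction followed by a Pythagorean comparison in normal coordinates. First, set $h := \dH(\gamma', \gamma)$ and pick a point $q \in \gamma'$ realizing the Hausdorff distance, so $\dist(q, \gamma) = h$. The convex hull $\hat D := \mathrm{conv}(\gamma \cup \{q\})$ is a convex subset of the convex domain bounded by $\gamma' \cup [x, y]$, so by monotonicity of perimeter for convex bodies in the plane the arc $\hat\gamma := \partial \hat D \setminus [x, y]$ satisfies $|\hat\gamma| \le |\gamma'|$. It therefore suffices to prove the inequality for $\hat\gamma$, which decomposes as $\gamma_{[x,a]} \cup [a, q] \cup [q, b] \cup \gamma_{[b,y]}$, where $a, b \in \gamma$ are the tangent contact points from $q$ to the convex arc $\gamma$ (interior to $\gamma$ by the integral curvature assumption).

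Next, I would establish a Pythagorean estimate. Let $p \in \gamma$ be the closest point to $q$, so $|pq| = h$ and $[p, q]$ is orthogonal to the tangent line $T_p$ to $\gamma$ at $p$. Work in local coordinates with $p$ at the origin, $T_p$ the horizontal axis, and $q = (0, h)$; by convexity, $\gamma$ lies in the closed lower half-plane. Then for any $z = (z_1, z_2) \in \gamma$, expanding
\[
|zq|^2 = |zp|^2 - 2 z_2 h + h^2 \ge |zp|^2 + h^2,
\]
since $z_2 \le 0$. Applying this to $z = a$ and $z = b$, and invoking the elementary inequality $\sqrt{A^2 + h^2} + \sqrt{B^2 + h^2} \ge \sqrt{(A + B)^2 + 4 h^2}$ (which reduces to $(A^2 + h^2)(B^2 + h^2) \ge (AB + h^2)^2$ via AM--GM), yields
\[
|aq| + |qb| \ge \sqrt{(|ap| + |pb|)^2 + 4 h^2}.
\]

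To close the argument, write $|\hat\gamma| = |\gamma| - L + |aq| + |qb|$ with $L := |\gamma_{[a,b]}|$, substitute the Pythagorean bound, and combine with the integral curvature hypothesis: the assumption $\int_\gamma |\kappa|\, ds \le \pi$ gives a uniform lower bound on the chord-to-arc ratio for any sub-arc of $\gamma$, which in turn controls $|ap| + |pb|$ from below as a positive fraction of $L$. This prevents the convex-hull shortcut from erasing more of $\gamma$ than the Pythagorean detour through $q$ can compensate for. Squaring and rearranging then produces the stated inequality.

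The main obstacle is the multiplicative factor $(1 + C h^2)$ in the conclusion. The additive $4 h^2$ term is a direct consequence of the Pythagorean detour and falls out cleanly; extracting the multiplicative bonus with a positive constant $C = C(\Omega)$ requires a careful quantitative analysis of the excess $|aq| + |qb| - (|ap| + |pb|)$ beyond what is visible in the flat Pythagorean gain, together with precise control on the chord-to-arc ratio on $\gamma_{[a,b]}$. This is precisely where the integral curvature hypothesis is indispensable: without it, $\gamma$ could almost close up into a loop and the chord $|ab|$ would become negligible relative to $L$.
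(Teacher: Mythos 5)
First, note that the paper does not prove this lemma at all: it is quoted, in weakened form, from Burago--Zalgaller \cite{BZConvex}, so your proposal has to stand entirely on its own --- and it has a genuine gap at exactly the step that carries the real content. Write $G=|\gamma|$, $L=|\gamma_{[a,b]}|$, $S=|aq|+|qb|$. Your construction gives $|\gamma'|\ge G-L+S$ and $S\ge\sqrt{(|ap|+|pb|)^2+4h^2}$, and since $|ap|+|pb|\le L$ (the chord-path is shorter than the arc it subtends), the target inequality $(G-L+S)^2\ge G^2\bigl(1+Ch^2\bigr)+4h^2$ forces you to bound $L$ \emph{from above} in terms of $|ap|+|pb|$, i.e.\ to produce a definite excess $S-L$. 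The mechanism you invoke --- ``integral curvature at most $\pi$ gives a uniform lower bound on the chord-to-arc ratio for any sub-arc of $\gamma$'' --- is false: a hairpin made of two parallel segments of length $M$ joined by a semicircle of radius $\rho$ has total turning exactly $\pi$, chord $2\rho$, and length $2M+\pi\rho$, and such arcs do occur on boundaries of convex bodies satisfying the hypothesis. So the crucial quantitative step is unproven; you concede this for the multiplicative factor $(1+Ch^2)$, but in fact even the additive term $4h^2$ (the only part the paper uses) is not secured, because if $L$ is much larger than $|ap|+|pb|$ the Pythagorean gain is swallowed by the removed arc. The ingredient that could rescue the step is one you never exploit: since $q$ realizes the maximal deviation, $\gamma$ avoids the open disk $B(q,h)$, so the visible arc $\gamma_{[a,b]}$ lies in the triangle $aqb$ but outside $B(q,h)$; separating this arc from the disk by a line yields $L\le S-\delta$ with a defect $\delta$ controlled by $h$ and the angle at $q$, and turning that into the stated inequality is where the actual work lies.

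A secondary but real issue is your very first step: you pick $q\in\gamma'$ with $\dist(q,\gamma)=\dH(\gamma',\gamma)$, but the Hausdorff distance may a priori be attained from the $\gamma$ side, and then your argument only sees the smaller quantity $\sup_{q\in\gamma'}\dist(q,\gamma)$. This is legitimate only under the nestedness that the statement tacitly intends (and that holds in the paper's application), namely that the convex domain $D$ bounded by $\gamma\cup[x,y]$ is contained in the domain $D'$ bounded by $\gamma'\cup[x,y]$; without some such reading the literal statement degenerates (e.g.\ $\gamma'$ could bound a domain on the opposite side of the chord). Under nestedness one can indeed show $\sup_{z\in\gamma}\dist(z,\gamma')\le\sup_{q\in\gamma'}\dist(q,\gamma)$: the outward normal ray of $D$ at $z\in\gamma$ cannot meet $[x,y]$, hence exits $D'$ at a point $w\in\gamma'$ with $|zw|=\dist(w,D)\le\dist(w,\gamma)$. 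Your write-up neither states the nestedness hypothesis nor supplies this argument, and your convex-hull inclusion $\mathrm{conv}(\gamma\cup\{q\})\subset D'$, as well as the claim that the nearest point $p$ admits a supporting line of $D$ perpendicular to $[p,q]$ (which fails if the metric projection of $q$ onto $D$ were to land on the open chord), silently rely on it as well.
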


From this lemma, we see that
\[
\dH(\gamma', \gamma) \le \frac{1}{2} \cdot \sqrt{|\gamma'|^2 - |\gamma|^2}.
\]
We apply Lemma~\ref{Lem:BZ} by setting $\gamma' = \ell_- \cup \ell_+$ and $\gamma = \partial T \sm (\ell_- \cup \ell_+)$. It is easy to verify that these curves satisfy the assumptions of the lemma. Thus, using \eqref{Eq:BE} and the fact that we work with curves of length at most $2\pi$, we obtain
\begin{equation*}
    \begin{aligned}
        &\dH(\ell_- \cup \ell_+, \partial T \sm (\ell_- \cup \ell_+)) \le \\ 
        &\le \frac{1}{2} \cdot \sqrt{|\ell_- \cup \ell_+| - |\partial T \sm (\ell_- \cup \ell_+)|} \cdot \sqrt{|\ell_- \cup \ell_+| + |\partial T \sm (\ell_- \cup \ell_+)|} \le\\
        &\le \frac{1}{2} \cdot \sqrt{2\pi \eps} \cdot \sqrt{4\pi} \le \pi\sqrt{2} \cdot \sqrt{\eps}.
    \end{aligned}
\end{equation*}
This gives the desired estimate \eqref{Eq:Goal2} for each connected component $T$ with $C' = \pi \sqrt{2}$ and finishes the proof that we can remove assumption \eqref{Eq:Touching}. Together with the result of Section~\ref{SSec:UnderAss}, this finishes the proof of Theorem~\ref{Thm:B}.

\end{document}